\newtheorem{lm}{Lemma}[section]
\newtheorem{prop}[lm]{Proposition}
\newtheorem{thm}[lm]{Theorem}
\newtheorem{cor}[lm]{Corollary}
\newtheorem{df}[lm]{Definition}
\theoremstyle{definition}
\newtheorem{rk}[lm]{Remark}
\newtheorem{ex}[lm]{Example}
\theoremstyle{remark}
\newcommand{\Spec}{\mathrm{Spec}\,}
\newcommand{\C}{\mathbb{C}}
\newcommand{\R}{\mathbb{R}}
\newcommand{\regsh}{\mathcal{O}}
\newcommand{\Q}{\mathbb{Q}}
\renewcommand{\div}{\mathrm{div}}
\newcommand{\e}{\varepsilon}
\newcommand{\Pspace}{\boldsymbol{\mathrm{P}}}
\newcommand{\Rscr}{\mathscr{R}}
\newcommand{\Proj}{\mathrm{Proj}}
\newcommand{\Fscr}{\mathscr{F}}
\newcommand{\rank}{\mathrm{rk}\,}
\title{Ample Weil divisors}
\author{alberto chiecchio and stefano urbinati}
\date{}
\begin{document}
\maketitle
\tableofcontents
\begin{abstract}We define and study positivity (nefness, amplitude, bigness and pseudo-effectiveness) for Weil divisors on normal projective varieties. We prove various characterizations, vanishing and non-vanishing theorems for cohomology, global generations statements, and a result related to log Fano.
\end{abstract}
\section{Introduction}
Positivity is a major topic in modern algebraic geometry, \cite{lazarsfeld}, and is originally defined for invertible sheaves on projective varieties. In recent years, there have been efforts to extend this notion to a broader context. Using Shokurov's $b$-divisors, in \cite{BFJ} and \cite{BdFF} the authors defined \emph{nef Weil $b$-divisors} and the \emph{nef envelope}. Independently, in \cite{stefano}, motivated by the pullback for Weil divisors defined in \cite{dFH}, the second author introduced \emph{nef Weil divisors}. These two definitions arise naturally and have good properties in many different ways.

Our motivation for studying positivity for reflexive sheaves comes from the Minimal Model Program, and the properties described in \cite{stefano} seem to be more connected with our scope. In this paper, starting from the second approach, we define (relative) nefness, amplitude, bigness and pseudo-effectiveness for Weil divisors, and we prove most of the properties one would expect from such definitions (vanishing, non-vanishing, and global generation theorems).

In section \ref{ndef} we first introduce and compare the two notions of nefness for Weil divisors (coming from the two possible approaches of \cite{BdFF} and \cite{stefano}). We then give our definition of ample, big and pseudo-effective reflexive sheaf and describe the first natural properties.

In section \ref{char} we prove some characterizations, that will emphasize why our definitions are natural.

In section \ref{van} we focus on vanishing theorems. We prove Serre's and Fujita's vanishing for locally free sheaves, and Kawamata-Viehweg vanishing.

In the last section we look more directly toward the Minimal Model Program. In section \ref{nvan} we prove non-vanishing and global generation. We conclude, in section \ref{fano}, by showing that for any lt${}^+$ variety with anticanonical ample divisor, there exists a boundary $\Delta$ that makes it log Fano in the usual sense.

The definitions and most of the results hold in any characteristic (as well as in mixed characteristic). We will restrict to characteristic $0$ only for Kawamata-Viehweg and for the last section.
\subsection*{Acknowledgments} We would like to thank T. de Fernex for the interesting discussions, the constant disponibility and for suggesting the definition of $b$-nef divisors. We would like to thank C. Casagrande, C. D. Hacon, S. Kov\'acs, A. Langer, M. Musta\c t\u a, Zs. Patakfalvi, K. Schwede for the interest showed in the project and helpful suggestions. We would like to thank C. Aponte Rom\'an for compelling us to work in mixed characteristic as long as possible. We are grateful to the referees for the several useful suggestions.

\section{Positive Weil divisors}
\label{ndef}
\textbf{Notation} Throughout this paper, all schemes will be of finite type over a field, which will alway be denoted by $k$, of arbitrary characteristic. Almost all the definitions are true in bigger generality (for example when $k$ is a Noetherian ring). We will write $1||m$ for $m$ sufficiently divisible.
\subsection{Two possible definitions of nef Weil divisor}
In this first part we will discuss two different approaches to the study of Weil divisors. The first one focuses more on the structure of the associated reflexive sheaf, the second one relies on the concept of $b$-divisors and was suggested to the authors by Professor T. de Fernex.
\subsubsection{\textbf{First approach}} Let $X$ be a normal variety over a noetherian ring $k$. Recall that, if $D$ is a Weil $\Q$-divisor,
$$
\Rscr(X,D):=\bigoplus_{m\geq0}\regsh_X(\lfloor mD\rfloor)
$$
is an $\regsh_X$-algebra. If $f:X\rightarrow U$ is a proper morphism of normal varieties and $D$ is a Weil $\Q$-divisor, we have
$$
\Rscr(X/U,D):=\bigoplus_{m\geq0}f_*\regsh_X(\lfloor mD\rfloor),
$$
which is naturally an $\regsh_U$-algebra.
\begin{rk}
If $f$ has connected fibers, $f_*\regsh_X=\regsh_U$. This is the case, for example, when $X\rightarrow U$ is proper birational, or if $X$ is connected and $U=\Spec k$. In general, the structure of $\regsh_U$-algebra on $\Rscr(X/U,D)$ is given via the natural map $\regsh_U\rightarrow f_*\regsh_X$.
\end{rk}
If $\Fscr$ is a coherent sheaf on $X$, we say that it is \emph{relatively globally generated} if $f^*f_*\Fscr\rightarrow\Fscr$ is surjective.
\begin{rk} This condition is local on the base, that is, it can be checked on open affine subschemes of $U$, where it reduces to the usual global generation.
\end{rk}

From now on we will restrict to the case of a projective morphism of quasi-projective normal varieties $X\rightarrow U$.
\begin{df} Let $f:X\rightarrow U$ be a projective morphism of quasi-projective normal varieties. A Weil $\Q$-divisor $D$ on $X$ is \emph{relatively asymptotically globally generated}, in short \emph{relatively agg} or \emph{$f$-agg}, if, for every positive integer $1||m$, $\regsh_X(mD)$ is relatively globally generated.
\end{df}
\begin{df}[cfr. \cite{stefano2}, 4.1]  Let $f:X\rightarrow U$ be a projective morphism of quasi-projective normal varieties over $k$. A Weil $\Q$-divisor $D$ on $X$ is \emph{relatively nef}, or \emph{$f$-nef}, if for every relatively ample $\Q$-Cartier divisor $A$, $\regsh_X(D+A)$ is $f$-agg.

If $U=\Spec k$ (so that $X$ is projective) we will simply say that $D$ is \emph{nef}.
\end{df}

\begin{rk}\label{rk:failadditivity} %This definition of nefness is expected to be \emph{not} additive. Indeed, let $X$ be a projective variety (over a Noetherian ring $k$) and $D_1$ and $D_2$ two nef Weil divisors such that for all $m\geq1$, $\regsh_X(mD_1)\otimes\regsh_X(mD_2)\neq\regsh_X(mD_1+mD_2)$. Then, for any ample Cartier divisor $A$ and for any $m\gg0$, $\regsh_X(mA+mD_1)$ and $\regsh_X(mA+mD_2)$ are globally generated; thus so is $\regsh_X(mA+mD_1)\otimes\regsh_X(mA+mD_2)$. However, $\regsh_X(mA+mD_1)\otimes\regsh_X(mA+mD_2)\neq\regsh_X(m2A+m(D_1+D_2))$, and the sheaf on the right is the reflexive hull of the sheaf on the left. Hence, the two sheaves have the same global sections: therefore $\regsh_X(m2A+m(D_1+D_2))$ cannot be globally generated, which implies that $D_1+D_2$ is \emph{not} nef. 
It is not clear wether this notion is additive. Let $D_1$ and $D_2$ be  two nef Weil divisors such that for all $m\geq1$, $\regsh_X(mD_1)\otimes\regsh_X(mD_2)\neq\regsh_X(mD_1+mD_2)$. Then, for any ample Cartier divisor $A$ and for any $m\gg0$, $\regsh_X(mA+mD_1)$ and $\regsh_X(mA+mD_2)$ are globally generated; thus so is $\regsh_X(mA+mD_1)\otimes\regsh_X(mA+mD_2)$. However, $\regsh_X(mA+mD_1)\otimes\regsh_X(mA+mD_2)\neq\regsh_X(m2A+m(D_1+D_2))$, and the sheaf on the right is the reflexive hull of the sheaf on the left. Thus it is not immediately possible to deduce the global generation of $\regsh_X(m2A+m(D_1+D_2))$.
\end{rk}
\subsubsection{\textbf{Second approach}}
The following notation is from \cite{BdFF}. In this subsection and the next one, the field $k$ is assumed to be algebraically closed of characteristic $0$.

The relative Riemann-Zariski space of $X$ is the projective limit
$$
\mathfrak{X}:=\varprojlim_{\pi} X_{\pi}
$$
where $\pi: X_{\pi} \to X$ is a proper birational morphism.

The group of Weil and Cartier $b$-divisors over $X$ are respectively defined as
\begin{eqnarray*}
\mathrm{Div}(\mathfrak{X}):= \varprojlim_{\pi} \mathrm{Div}(X_{\pi}) \quad \textrm{and}\\
\mathrm{CDiv}(\mathfrak{X}):= \varinjlim_{\pi} \mathrm{CDiv}(X_{\pi}),
\end{eqnarray*}
where the first limit is taken with respect to the pushforward and the second with respect to the pullback. We will say that a Cartier $b$-divisor $C$ is \emph{determined} on $X_{\pi}$ for a model $X_{\pi}$ over $X$ such that $C_{\pi'}=f^*C_{\pi}$ for every $f: X_{\pi'} \to X_{\pi}$.

Let $\mathfrak{a}$ be a coherent fractional ideal sheaf on $X$. If $X_{\pi} \to X$ is the normalized blow-up of $X$ along $\mathfrak{a}$, we will denote $Z(\mathfrak{a})$ the Cartier $b$-divisor determined by $\mathfrak{a}\cdot \regsh_{X_{\pi}}$. 

\begin{lm}[\cite{BdFF}, 1.8] A Cartier $b$-divisor $C \in \mathrm{CDiv}(\mathfrak{X})$ is of the form $Z(\mathfrak{a})$ if and only if $C$ is relatively globally generated over $X$.
\end{lm}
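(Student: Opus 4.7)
The plan is to prove the two implications by working on a model $X_\pi$ on which $C$ is determined, identifying $\regsh_{X_\pi}(C_\pi)$ with a coherent subsheaf of the constant sheaf $\Frac(X_\pi)$ of rational functions, and exploiting the universal property of the normalized blow-up. The key identity on which both directions rest is that for any coherent fractional ideal sheaf $\mathfrak{b}$ on $X$, the image of the natural map $\pi^*\mathfrak{b}\to \Frac(X_\pi)$ is exactly $\mathfrak{b}\cdot\regsh_{X_\pi}$; applied to $\mathfrak{b}=\pi_*\regsh_{X_\pi}(C_\pi)$ and composed with adjunction, this identifies the image of $\pi^*\pi_*\regsh_{X_\pi}(C_\pi)\to\regsh_{X_\pi}(C_\pi)$ with $\mathfrak{b}\cdot\regsh_{X_\pi}$.

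For the only-if direction, I would start from $C=Z(\mathfrak{a})$, take $X_\pi$ to be the normalized blow-up of $\mathfrak{a}$, and use the defining equality $\regsh_{X_\pi}(C_\pi)=\mathfrak{a}\cdot\regsh_{X_\pi}$. Taking $\pi_*$ gives the inclusion $\mathfrak{a}\subseteq\pi_*\regsh_{X_\pi}(C_\pi)$, so that the surjection $\pi^*\mathfrak{a}\twoheadrightarrow\regsh_{X_\pi}(C_\pi)$ (which exists by the previous paragraph) factors through the adjunction map $\pi^*\pi_*\regsh_{X_\pi}(C_\pi)\to\regsh_{X_\pi}(C_\pi)$, forcing the latter to be surjective; hence $C$ is relatively globally generated.

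For the if direction, I would let $X_\pi$ be a model on which $C$ is determined and set $\mathfrak{a}:=\pi_*\regsh_{X_\pi}(C_\pi)$, which is coherent by properness of $\pi$ and is a fractional ideal sheaf via the embedding $\regsh_{X_\pi}(C_\pi)\hookrightarrow\Frac(X_\pi)\cong\Frac(X)$. The hypothesis of relative global generation, translated through the image identification above, yields $\mathfrak{a}\cdot\regsh_{X_\pi}=\regsh_{X_\pi}(C_\pi)$, so that $\mathfrak{a}\cdot\regsh_{X_\pi}$ is invertible on $X_\pi$. By the universal property of the blow-up together with normality of $X_\pi$, the morphism $\pi$ factors through the normalized blow-up of $\mathfrak{a}$; by the definition of $Z(\mathfrak{a})$ and functoriality, this forces $Z(\mathfrak{a})_\pi=C_\pi$, hence $C=Z(\mathfrak{a})$ in $\mathrm{CDiv}(\mathfrak{X})$.

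The main obstacle I expect is bookkeeping rather than any deep input: one must verify carefully that $\pi_*\regsh_{X_\pi}(C_\pi)$ really is a fractional ideal sheaf in the precise sense of \cite{BdFF} (coherent sub-$\regsh_X$-module of the constant function field sheaf), and that the phrase \emph{relatively globally generated over $X$} is to be read through adjunction in the way that identifies the image of $\pi^*\pi_*\regsh_{X_\pi}(C_\pi)$ with $\mathfrak{a}\cdot\regsh_{X_\pi}$. Once these identifications are pinned down, the argument in both directions reduces to the universal property of the normalized blow-up.
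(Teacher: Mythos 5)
Your argument is correct, and there is nothing in the paper to compare it against: the lemma is stated there purely as a citation of [BdFF12, Lemma~1.8], with no proof given. Your two implications --- obtaining relative global generation of $Z(\mathfrak{a})$ from the surjection $\pi^*\mathfrak{a}\twoheadrightarrow\mathfrak{a}\cdot\regsh_{X_\pi}$ factoring through the adjunction counit, and conversely setting $\mathfrak{a}=\pi_*\regsh_{X_\pi}(C_\pi)$, identifying the image of the counit with $\mathfrak{a}\cdot\regsh_{X_\pi}$, and invoking the universal property of the normalized blow-up --- are precisely the standard proof in the cited source, and the bookkeeping points you flag (that $\pi_*\regsh_{X_\pi}(C_\pi)$ is a coherent fractional ideal sheaf, and the reading of relative global generation through adjunction) are exactly the right ones and all check out.
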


\begin{df}[\cite{BdFF}, 2.3] Let $D$ be an $\R$-Weil divisor on $X$. The \emph{nef envelope} $\mathrm{Env}(D)$ is the $\R$-Weil $b$-divisor $Z(\mathfrak{a}_{\bullet})$ associated to the graded sequence $\mathfrak{a}_{\bullet}=\{\regsh_X(mD)\}_{m \geq 1}$, where $$Z(\mathfrak{a}_{\bullet}) := \lim_{m\to \infty} \frac{1}{m} Z(\mathfrak{a}_m).$$
\end{df}

\begin{lm}[\cite{BdFF}, 2.10 and 2.12] We have that 
\begin{enumerate}
\item  $Z(\mathfrak{a}_{\bullet})$ is $X$-nef and

\item $\mathrm{Env}(D)$ is the largest $X$-nef $\R$-Weil $b$-divisor $W$ such that $\pi_*(W_{\pi})\leq D$ for all $\pi$.
\end{enumerate}
\end{lm}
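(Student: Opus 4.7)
The plan is to prove (a) by reducing each finite-level piece to the preceding lemma and then invoking closedness of the $X$-nef cone, and to prove (b) by first checking that $\mathrm{Env}(D)$ is itself a competitor and then showing that every other competitor $W$ satisfies $m W \leq Z(\mathfrak{a}_m)$ for all $m$, after which dividing by $m$ and passing to the limit yields $W \leq Z(\mathfrak{a}_\bullet) = \mathrm{Env}(D)$.

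For part (a), each $Z(\mathfrak{a}_m)$ is by construction of the form $Z(\mathfrak{a})$ for $\mathfrak{a} = \mathfrak{a}_m = \regsh_X(mD)$, so the preceding lemma gives that $Z(\mathfrak{a}_m)$ is relatively globally generated over $X$, hence $X$-nef. The cone of $X$-nef $\R$-Weil $b$-divisors is stable under positive rescaling, and from the prime-by-prime description $\mathrm{ord}_E(Z(\mathfrak{a}_\bullet)) = \lim_m \frac{1}{m}\mathrm{ord}_E(Z(\mathfrak{a}_m))$ one checks it is closed under the graded-sequence limit used to define $Z(\mathfrak{a}_\bullet)$; hence the limit is again $X$-nef.

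For part (b), I would first verify that $\mathrm{Env}(D)$ itself satisfies $\pi_*\mathrm{Env}(D)_\pi \leq D$ for every $\pi$: since $Z(\mathfrak{a}_m)_\pi$ is the divisor of the invertible pullback of $\mathfrak{a}_m = \regsh_X(mD)$, its pushforward to $X$ is componentwise bounded by $mD$, and this survives dividing by $m$ and taking the limit. For the maximality, let $W$ be an $X$-nef $\R$-Weil $b$-divisor with $\pi_*W_\pi \leq D$ for all $\pi$. To get $W \leq \mathrm{Env}(D)$ it suffices to establish $m W \leq Z(\mathfrak{a}_m)$ for every $m$. The argument is valuative: for a prime divisor $E$ living on some model $X_\pi$, relative global generation of $Z(\mathfrak{a}_m)$ realizes $\mathrm{ord}_E(Z(\mathfrak{a}_m))$ as the minimal $E$-valuation of sections in $\mathfrak{a}_m$; combining nefness of $W_\pi$ with the pushforward bound $\pi_*W_\pi \leq D$ then forces $m\,\mathrm{ord}_E(W) \leq \mathrm{ord}_E(Z(\mathfrak{a}_m))$.

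The main obstacle is this last valuative comparison, which is the substantive content beyond formal manipulations of the nef cone. Turning the pair (nef on models, pushforward $\leq D$) into the pointwise bound $m W \leq Z(\mathfrak{a}_m)$ requires either a relative base-point-free type argument on a sufficiently high model dominating both $X_\pi$ and the blow-up of $\mathfrak{a}_m$, or the negativity lemma applied to the difference $Z(\mathfrak{a}_m) - m W$. This is also where the standing hypotheses of characteristic zero and algebraically closed base field enter, via resolution of singularities and the negativity-type results for relatively nef Cartier divisors on birational models that underpin the whole $b$-divisor formalism of \cite{BdFF}.
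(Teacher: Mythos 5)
The paper does not prove this lemma at all: it is imported verbatim from \cite{BdFF} (Lemma 2.10 and Proposition 2.12 there), so there is no internal argument to compare against. Measured against the original source, your outline follows the same route. For (a), the reduction of each $\tfrac1m Z(\mathfrak{a}_m)$ to the preceding lemma (relatively globally generated $\Rightarrow$ $X$-nef) and the passage to the limit is exactly right; note that in the $b$-divisor formalism of \cite{BdFF} an $X$-nef Weil $b$-divisor is essentially \emph{defined} as a limit of $X$-nef Cartier $b$-divisors, so the ``closedness of the nef cone'' you invoke is close to tautological once you also justify that the limit $\lim_m \tfrac1m Z(\mathfrak{a}_m)$ exists (superadditivity of $m\mapsto Z(\mathfrak{a}_m)$ coming from $\mathfrak{a}_m\cdot\mathfrak{a}_{m'}\subseteq\mathfrak{a}_{m+m'}$, plus a Fekete-type argument). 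For the first half of (b), the bound $\pi_*Z(\mathfrak{a}_m)_\pi=\lfloor mD\rfloor\leq mD$ is correct and survives the limit.

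The one place where your sketch is not merely incomplete but slightly miscalibrated is the maximality step. You write that you will ``combine nefness of $W_\pi$ with the pushforward bound,'' but for a Weil $b$-divisor $W$ the trace $W_\pi$ need not be a nef (or even $\R$-Cartier) divisor on $X_\pi$; nefness of $W$ lives only at the level of the approximating Cartier $b$-divisors. So the argument must first reduce to the case where $W$ is an $X$-nef \emph{Cartier} $b$-divisor determined on some model $X_\pi$, and one has to check that this reduction is compatible with the constraint $\pi_*W_\pi\leq D$ (the approximants of a competitor need not themselves be competitors; \cite{BdFF} handles this with a monotone approximation). Once $W$ is Cartier and determined on $X_\pi$ with $W_\pi$ nef over $X$, the chain you want is: sections of $\regsh_{X_\pi}(mW_\pi)$ push forward into $\mathfrak{a}_m=\regsh_X(mD)$ because $\pi_*(mW_\pi)\leq mD$, giving $Z(\pi_*\regsh_{X_\pi}(mW_\pi))\leq Z(\mathfrak{a}_m)$, and then the negativity lemma (not resolution of singularities, and valid in any characteristic) supplies $mW\leq Z(\pi_*\regsh_{X_\pi}(mW_\pi))$. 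You correctly identify this as the substantive content and name the right tool, but as written the step is a declared gap rather than a proof; filling it requires exactly the Cartier reduction just described.
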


\begin{df} We will say that an $\mathbb{R}$-divisor $D$ on $X$ is \emph{$b$-nef} if $\mathrm{Env}(D)$ is a nef $b$-divisor, in the sense that it is a limit of Cartier $b$-divisors determined by nef divisors. 
\end{df}

\begin{prop}\label{prop:b-nef is additive} Nef $b$-divisors are additive, i.e. if $N_1, N_2$ are nef, then $N_1+N_2$ is nef as well.
\end{prop}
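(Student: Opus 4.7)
The plan is to reduce the claim to the analogous statement for Cartier $b$-divisors determined by nef Cartier divisors, and then pass to the limit. By the definition of $b$-nefness, each $N_i$ can be written as $N_i = \lim_{k\to\infty} C_i^{(k)}$, where each $C_i^{(k)} \in \mathrm{CDiv}(\mathfrak{X})$ is a Cartier $b$-divisor that is determined on some model $X_{\pi_i^{(k)}}$ by a nef Cartier divisor $D_i^{(k)}$ on that model. So it suffices to prove two things: (1) sums of Cartier $b$-divisors determined by nef Cartier divisors are again Cartier $b$-divisors determined by nef Cartier divisors, and (2) the resulting sequence of sums limits to $N_1 + N_2$.

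For (1), I would fix $k$ and pass to a common model. Because the category of proper birational models over $X$ is filtered, there exists a model $X_\pi \to X$ dominating both $X_{\pi_1^{(k)}}$ and $X_{\pi_2^{(k)}}$ via morphisms $f_i : X_\pi \to X_{\pi_i^{(k)}}$. On $X_\pi$, the divisor $C_i^{(k)}$ is determined by $f_i^* D_i^{(k)}$, and this is nef because pullbacks of nef Cartier divisors along proper morphisms are nef. Since the sum of two nef Cartier divisors on a fixed projective variety is nef, the Cartier $b$-divisor $C_1^{(k)} + C_2^{(k)}$ is determined on $X_\pi$ by the nef Cartier divisor $f_1^* D_1^{(k)} + f_2^* D_2^{(k)}$. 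In particular, $C_1^{(k)} + C_2^{(k)}$ is a Cartier $b$-divisor determined by a nef Cartier divisor for every $k$.

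For (2), the sum of limits in $\mathrm{Div}(\mathfrak{X})$ is the limit of the sums: since the group $\mathrm{Div}(\mathfrak{X}) = \varprojlim_\pi \mathrm{Div}(X_\pi)$ is an inverse limit of abelian groups (with the pushforward transition maps), addition commutes with the componentwise limits used in defining $Z(\mathfrak{a}_\bullet)$, so
\[
N_1 + N_2 = \lim_{k \to \infty}\bigl(C_1^{(k)} + C_2^{(k)}\bigr).
\]
Combining this with (1), we exhibit $N_1 + N_2$ as a limit of Cartier $b$-divisors determined by nef Cartier divisors, hence $N_1 + N_2$ is nef in the sense of the definition.

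I expect the only real obstacle to be a notational one, namely making the limit-of-sums argument precise relative to the conventions of \cite{BdFF} (in particular, checking that the limit used to define $\mathrm{Env}$ and more generally a nef $b$-divisor is taken in a sense compatible with addition on each model $X_\pi$). Once that is granted, the proof is essentially formal, as the nontrivial input reduces to the classical fact that pullback and sums preserve nefness of Cartier divisors on projective varieties.
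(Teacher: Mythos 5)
Your proposal is correct and is essentially an expansion of the paper's own (one-line) proof, which simply invokes the fact that the sum of two limits is the limit of the sum; your points (1) (passing to a common model where both Cartier $b$-divisors are determined by nef divisors, so their sum is too) and (2) (compatibility of the coefficient-wise limit with addition) are exactly the details that argument implicitly relies on.
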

\begin{proof}
This comes directly from the fact the sum of two limits is the limit of the sum.
\end{proof}
\subsubsection{\textbf{Relations}} This section is devoted to the comparison of the two definitions. Let us recall that in this subsection we are working over an algebraically closed field of characteristic $0$. 
\begin{prop} Let $X$ be a normal variety and $D$ be a Weil $\R$-divisor. If $\Rscr(X, D)$ is a finitely generated $\regsh_X$-algebra, then $D$ is nef if and only if $D$ is $b$-nef. 
\end{prop}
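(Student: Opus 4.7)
The plan is to use the finite generation hypothesis to realize both notions of nefness on a single birational model. After a suitable truncation, $\Rscr(X,D)$ is generated in a single degree $n$, and I form $\pi\colon X' := \ProjSc_X \bigoplus_{m\geq 0} \regsh_X(mnD) \to X$. Its tautological line bundle has the form $\regsh_{X'}(nD')$ for a $\Q$-Cartier divisor $D'$ on $X'$ enjoying three properties: (i) $D'$ is $\pi$-ample; (ii) $\pi_*\regsh_{X'}(mD') = \regsh_X(mD)$ for $1||m$; and (iii) the nef envelope $\mathrm{Env}(D)$ is determined on $X'$ by $D'$. Point (iii) is the content of finite generation: the limit $\lim\frac{1}{m}Z(\mathfrak{a}_m)$ in the definition of the envelope stabilizes on this single model. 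Consequently, the $b$-nefness of $D$ reduces to classical nefness of $D'$ on $X'$, and it suffices to show that $D$ is nef on $X$ if and only if $D'$ is nef on $X'$.

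For the forward implication, fix an ample $\Q$-Cartier $A$ on $X$. Nefness of $D$ gives that $\regsh_X(m(D+A))$ is globally generated for $1||m$. By (i), $\pi^*\pi_*\regsh_{X'}(mD')\twoheadrightarrow \regsh_{X'}(mD')$ is surjective for such $m$; tensoring with the line bundle $\pi^*\regsh_X(mA)$ produces a surjection $\pi^*\regsh_X(m(D+A))\twoheadrightarrow \regsh_{X'}(m(D'+\pi^*A))$. Since the source is globally generated (pullback of a globally generated sheaf), so is the target, whence $D'+\pi^*A$ is nef on $X'$. Intersecting with any curve $C\subset X'$ and letting $A$ shrink along a ray $\e A_0$: if $\pi(C)$ is a point then $\pi^*A_0\cdot C=0$, otherwise $\pi^*A_0\cdot C = A_0\cdot\pi_*C>0$; in either case $D'\cdot C\geq 0$, so $D'$ is nef.

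For the converse, assume $D'$ is nef and verify that $D'+\pi^*A$ is actually ample on $X'$ via Kleiman's criterion. Let $\gamma\in \overline{NE}(X')\setminus\{0\}$. If $\pi_*\gamma=0$, then $\pi^*A\cdot\gamma=0$ while $D'\cdot\gamma>0$ by $\pi$-ampleness of $D'$, since $\gamma$ lies in the relative cone. If $\pi_*\gamma\neq 0$, then $\pi^*A\cdot\gamma = A\cdot\pi_*\gamma>0$ by ampleness of $A$, while $D'\cdot\gamma\geq 0$ by nefness. Either way $(D'+\pi^*A)\cdot\gamma>0$, so $D'+\pi^*A$ is ample. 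Hence $m(D'+\pi^*A)$ is very ample for $1||m$, and by projection formula together with (ii), $\pi_*\regsh_{X'}(m(D'+\pi^*A))=\regsh_X(m(D+A))$. Relative Serre vanishing gives $R^i\pi_*\regsh_{X'}(m(D'+\pi^*A))=0$ for $i>0$ and $1||m$, so cohomology and base change plus Nakayama transfer global generation on fibers of $\pi$ to global generation of $\regsh_X(m(D+A))$ on $X$.

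The main obstacle is the model-theoretic setup, specifically point (iii): identifying $\mathrm{Env}(D)$ with the Cartier $b$-divisor determined by $D'$ on $X'$. This is the step where finite generation is actually consumed — it is what makes the limit defining the envelope terminate on a single model — and after this identification, the comparison becomes an ordinary juggling of positivity properties on a fixed projective variety. The Kleiman-criterion step is the other technical point, hinging essentially on having $D'$ simultaneously $\pi$-ample (from the \textsf{Proj} construction) and nef (from the hypothesis); neither condition alone would suffice.
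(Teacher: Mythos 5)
Your reduction is the same as the paper's: pass to the small $\Q$-Cartierization $\pi\colon X'=\Proj_X\Rscr(X,D)\to X$, observe that $\mathrm{Env}(D)$ is determined there by the $\Q$-Cartier, $\pi$-ample strict transform $D'$, and compare nefness of $D$ with nefness of $D'$. The paper disposes of that comparison by citing \cite[4.3]{stefano2}; you attempt to reprove it. Your forward direction ($D$ nef $\Rightarrow$ $D'$ nef) is correct, and the Kleiman-criterion step showing $D'+\pi^*A$ ample is fine (modulo the standard fact that a nonzero class in $\overline{NE}(X')$ killed by $\pi_*$ lies in the relative cone, or equivalently that $D'+N\pi^*A$ is ample for $N\gg0$).

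The gap is the last step of the converse. From $\Lscr:=\regsh_{X'}(m(D'+\pi^*A))$ globally generated on $X'$ you need $\pi_*\Lscr=\regsh_X(m(D+A))$ globally generated on $X$, and ``cohomology and base change plus Nakayama'' does not deliver this: cohomology and base change requires $\Lscr$ to be flat over $X$, which fails for any nontrivial birational $\pi$; and even granting an isomorphism $\pi_*\Lscr\otimes k(x)\cong H^0(\pi^{-1}(x),\Lscr)$, you would still need $H^0(X',\Lscr)\to H^0(\pi^{-1}(x),\Lscr)$ to surject onto the sections over a positive-dimensional fiber, which global generation of $\Lscr$ does not provide (it only gives surjectivity onto $\Lscr\otimes k(y)$ at points $y$). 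Put differently, applying $\pi_*$ to the surjection $H^0(X',\Lscr)\otimes\regsh_{X'}\twoheadrightarrow\Lscr$ is only left exact, and the obstruction lives in $R^1\pi_*$ of the syzygy sheaf, which your vanishing $R^{>0}\pi_*\Lscr=0$ does not control. A correct route is Castelnuovo--Mumford regularity on $X$: for $B$ very ample on $X$, Leray plus $R^{>0}\pi_*\Lscr=0$ gives $H^i(X,\pi_*\Lscr\otimes\regsh_X(-iB))=H^i(X',\Lscr\otimes\pi^*\regsh_X(-iB))$, and the latter vanishes for $1||m$, $m\gg0$, by Serre/Fujita vanishing on $X'$ since $\Lscr$ is ample; so $\pi_*\Lscr$ is $0$-regular, hence globally generated. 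This is essentially the content of \cite[4.3]{stefano2}, i.e.\ exactly the step your write-up elides.
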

\begin{proof} Since $\mathrm{Env}(D)$ is determined at the level of the small map from $\Proj_X\Rscr(X,D)$, it is an easy consequence of \cite[4.3]{stefano2}.
\end{proof}

\begin{prop}
Let $D$ be an $\mathbb{R}$-divisor on $X$. If $D$ is nef, then $D$ is $b$-nef.
\end{prop}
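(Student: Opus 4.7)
The plan is to realize $\mathrm{Env}(D)$ as a limit of nef Cartier $b$-divisors by first perturbing $D$ by a shrinking ample divisor and then letting the perturbation vanish. Fix an ample Cartier divisor $A$ on $X$. By applying the definition of nef to the ample test divisor $\e A$, for every rational $\e>0$ and every integer $m$ sufficiently divisible (in particular so that $m\e\in\Z$), the reflexive sheaf $\mathfrak{b}_{m,\e}:=\regsh_X(m(D+\e A))$ is globally generated on $X$. Pulling back its global generators along the normalized blow-up $\pi\colon Y\to X$ of $\mathfrak{b}_{m,\e}$ shows that $\mathfrak{b}_{m,\e}\cdot\regsh_Y=\regsh_Y(-E)$ is a globally generated, hence nef, invertible sheaf on $Y$. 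Thus $Z(\mathfrak{b}_{m,\e})$ is a Cartier $b$-divisor determined by a nef divisor, and the defining formula
\[
\mathrm{Env}(D+\e A)=\lim_{m\to\infty}\frac{1}{m}Z(\mathfrak{b}_{m,\e})
\]
realizes $\mathrm{Env}(D+\e A)$ as a limit of nef Cartier $b$-divisors for every $\e>0$.

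The next step is the asymptotic identity $\mathrm{Env}(D+\e A)=\mathrm{Env}(D)+\e\,\overline{A}$ in $\mathrm{Div}(\mathfrak{X})_{\R}$, where $\overline{A}$ denotes the Cartier $b$-divisor of $A$. The inequality $\geq$ is superadditivity of the envelope applied to $\mathrm{Env}(\e A)=\e\,\overline{A}$, which holds because $A$ is Cartier. For the inequality $\leq$, multiplication by $f_A^{m\e}$, with $f_A$ a local equation for $A$, carries local sections of $\regsh_X(m(D+\e A))$ into local sections of $\regsh_X(mD)$; hence for every divisorial valuation $v_F$ one gets $v_F(g)\geq\min_{g'\in\regsh_X(mD)}v_F(g')-m\e\,v_F(A)$ for each local section $g$ of $\regsh_X(m(D+\e A))$, and dividing by $m$ and letting $m\to\infty$ yields the desired bound. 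Granted the identity, $\mathrm{Env}(D+\e A)\to\mathrm{Env}(D)$ in the weak topology on $\mathrm{Div}(\mathfrak{X})$ as $\e\downarrow 0$.

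A diagonal argument finishes the proof: choose $\e_n\downarrow 0$ and $m_n\to\infty$ such that $\frac{1}{m_n}Z(\mathfrak{b}_{m_n,\e_n})$ simultaneously approximates $\mathrm{Env}(D+\e_n A)$ and, via the identity above, $\mathrm{Env}(D)$ on each chosen finite family of models. This exhibits $\mathrm{Env}(D)$ as a limit of nef Cartier $b$-divisors, so $D$ is $b$-nef. The main obstacle is the asymptotic identity itself: while superadditivity and the multiplication trick are individually elementary, combining them with the weak topology on the inverse limit $\mathrm{Div}(\mathfrak{X})$ and selecting a consistent diagonal across infinitely many models requires some care, naturally handled by the continuity results for envelopes in \cite{BdFF}.
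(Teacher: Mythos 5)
Your proposal is correct and follows essentially the same route as the paper: use nefness to make $\regsh_X(m(D+\e A))$ globally generated, observe that the associated Cartier $b$-divisor $Z(\regsh_X(m(D+\e A)))$ is then determined by a nef divisor so that $\mathrm{Env}(D+\e A)$ is a limit of nef Cartier $b$-divisors, and finally let $\e\downarrow 0$. The only cosmetic difference is that you justify the passage to the limit in $\e$ via the explicit identity $\mathrm{Env}(D+\e A)=\mathrm{Env}(D)+\e\overline{A}$ (which for $A$ Cartier follows at once from $\regsh_X(m(D+\e A))=\regsh_X(mD)\otimes\regsh_X(m\e A)$), whereas the paper simply invokes the continuity statement $\lim_n\mathrm{Env}(B_n)=\mathrm{Env}(\lim_n B_n)$ from \cite[2.7]{BdFF}; both arguments rest on the same diagonal-type limit in $\mathrm{Div}(\mathfrak{X})$.
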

\begin{proof} By definition of nefness, for every $\Q$-Cartier ample divisor $A$, $\regsh_X(m(D+A))$ is globally generated for all positive $m$ divisible by a given $m_0$. Let us fix an $m$ and let us consider a log resolution $f:Y\rightarrow X$ of $\regsh_X(mD)$. Then $\regsh_Y\cdot \regsh_X(m(D+A))$ is globally generated. Since we have
$$\regsh_Y\cdot\regsh_X(m(D+A)) = [\regsh_Y\cdot\regsh_X(m(D+A))]^{\vee \vee},$$
$\div(\regsh_Y\cdot\regsh_X(m(D+A))$ is nef, thus so is the Cartier $b$-divisor determined by it. Since this is true for every $m$ divisible by $m_0$, ${\rm Env}(D+A)=\frac{1}{m_0}{\rm Env}(m_0(D+A))$, by \cite[2.6]{BdFF}, ${\rm Env}(D+A)$ is the limit of Cartier $b$-divisors determined by nef divisors.

Since this is true for any $A$, and $\lim_n {\rm Env}(B_n)={\rm Env}(\lim_n B_n)$, \cite[2.7]{BdFF}, ${\rm Env}(D)$ is the limit of Cartier $b$-divisors determined by nef divisors as well, which implies that $D$ is $b$-nef.
\end{proof}

\begin{rk} We do not know if the converse implication holds in general. Since $b$-nefness is additive, as consequence of \ref{prop:b-nef is additive}, a failure of additivity for nefness, see remark \ref{rk:failadditivity}, would imply that the two notions are not equivalent.
\end{rk}

\subsection{Definition and first properties of ample divisors}

\begin{df} Let $f:X\rightarrow U$ be a projective morphism of quasi-projective normal varieties over $k$. A Weil $\Q$-divisor $D$ on $X$ is called \emph{relatively almost ample}, or \emph{almost $f$-ample}, if, for every relatively ample $\Q$-Cartier divisor $A$ there exists a $b>0$ such that $bD-A$ is $f$-nef. It is called \emph{relatively ample}, or \emph{$f$-ample}, if, in addition, $\Rscr(X/U,D)$ is finitely generated.

If $U=\Spec k$, in the two cases above we will simply say \emph{almost ample} or \emph{ample}.
\end{df}
\begin{rk} If $X$ is projective over $\C$, and $(X,\Delta)$ is a klt pair, then for all $\Q$-divisors, the algebras $\Rscr(X/U,D)$ are finitely generated $\regsh_X$-algebras, \cite[92]{kol}, thus the notions of almost ample and ample coincide. On the other hand, in general almost ample is not equivalent to ample, as example \ref{ex:almost ample not ample} shows.
\end{rk}
\begin{df} Let $f:X\rightarrow U$ be a projective morphism of quasi-projective normal varieties over $k$. A Weil $\Q$-divisor $D$ is called \emph{relatively big}, or \emph{f-big}, if there exist an $f$-ample $\Q$-Cartier divisor $A$ and an effective Weil $\Q$-divisor $E$ such that $D\sim_{f,\Q} A+E$.

If $U=\Spec k$, we will simply say that $D$ is \emph{big}.
\end{df}
\begin{df} Let $f:X\rightarrow U$ be a projective morphism of quasi-projective normal varieties over $k$. A Weil $\Q$-divisor $D$ is called \emph{relatively pseudo-effective}, or \emph{$f$-pseff}, if for every $f$-ample $\Q$-Cartier divisor $A$, $D+A$ is $f$-big.

If $U=\Spec k$, we will simply say that $D$ is \emph{pseudo-effective}.
\end{df} 
\begin{lm}\label{lm:immediate properties} Let $f:X\rightarrow U$ be a projective morphism of normal quasi-projective varieties over $k$. Let $D$ be a Weil $\Q$-divisor on $X$. We have the following properties.
\begin{enumerate}
\item Assume that $D$ is a $\Q$-Cartier divisor and $k$ is an algebraically closed field. Then the above notions of (relative) nefness, bigness, pseudo-effectiveness and amplitude coincide with the usual ones.
\item Relative nefness, bigness, pseudo-effectiveness and (almost) amplitude are asymptotic conditions, that is, $D$ is relatively nef / big / pseff / (almost) ample if and only if for any, or all, $m>0$, $mD$ is relatively nef / big / pseff / (almost) ample.
\item Let $D$ be an $f$-nef divisor, and let $L$ be a relatively agg $\Q$-Cartier divisor. Then $D+L$ is relatively nef.
\item Let $D$ be a Weil $\Q$-divisor. Then $D$ is relatively almost ample if and only if, for any $\Q$-Cartier divisor $L$, there exists a $b>0$ such that $bD-L$ is relatively nef.
\item Let $D$ be relatively almost ample. Then $D$ is relatively nef.
\item Let $D$ be relatively almost ample. Then $D$ is relatively agg.
\item\label{it:ample=>big} Let $D$ be relatively almost ample and $k$ be an algebraically closed field. Then $D$ is $f$-big.
\item Let $D$ be relatively big. Then it is relatively pseudo-effective.
\item\label{it:D+A 1} Let $D$ be a relatively agg Weil $\Q$-divisor, and let $A$ be a relatively ample $\Q$-Cartier divisor. Then $D+A$ is relatively almost ample. If moreover $\Rscr(X,D)$ is finitely generated, $D+A$ is relatively ample.
\item Let $D$ be a relatively (almost) ample Weil $\Q$-divisor and let $A$ be a relatively agg $\Q$-Cartier divisor. Then $D+A$ is relatively (almost) ample.
\item Let $D$ be a Weil $\Q$-divisor. Then $D$ is relatively (almost) ample if and only if, for any $\Q$-Cartier divisor $L$, there exists a $b>0$ such that $bD-L$ is relatively (almost) ample.
\end{enumerate}
\end{lm}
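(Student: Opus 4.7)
The plan is to treat the eleven items as a chain of elementary consequences built on a small number of recurring tools. The three main ingredients are the ``Cartier transfer'' identity $\regsh_X(D) \otimes \regsh_X(L) \cong \regsh_X(D + L)$ whenever $L$ is Cartier, which lets global generation pass between reflexive hulls of sums with a Cartier summand; the observation that an agg $\Q$-Cartier divisor is classically semi-ample, so that its sum with a classically ample $\Q$-Cartier divisor is again classically ample Cartier; and the fact that each of the four positivity notions is manifestly asymptotic in the sense of part (b), directly from the ``$1||m$'' convention.

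Part (a) is an unwinding of the definitions together with Kleiman's classical characterization applied to locally free rank-one sheaves, and part (b) is immediate. Part (c) follows by noting that $L$ agg $\Q$-Cartier is semi-ample in the classical sense, so $L + A$ is classically ample Cartier for any ample $\Q$-Cartier $A$; applying the definition of $f$-nef to the ample divisor $L + A$ then gives that $D + L + A$ is agg, whence $D + L$ is $f$-nef. Part (d) follows by writing any $\Q$-Cartier $L$ as a difference $A_1 - A_2$ of ample $\Q$-Cartier divisors, so $bD - L = (bD - A_1) + A_2$, and invoking (c). For (f), given $D$ almost ample, pick $b$ with $bD - A$ nef; then $bD = (bD - A) + A$ is agg directly from the definition of nef, and (b) passes this to $D$. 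Part (e) then follows from (f) via the Cartier transfer identity: for ample $\Q$-Cartier $A'$, $\regsh_X(m(D + A')) \cong \regsh_X(mD) \otimes \regsh_X(mA')$ is a tensor of globally generated sheaves, hence globally generated, for $m$ sufficiently divisible. Part (h) is the trivial rewriting $D + A' \sim_\Q (A + A') + E$ with $A + A'$ ample.

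The only item requiring a genuinely new idea is (g). Given $D$ almost ample and $A$ ample $\Q$-Cartier, choose by (d) some $b$ with $bD - 2A$ nef; then $bD - A = (bD - 2A) + A$ is agg, so $\regsh_X(m(bD - A))$ is relatively globally generated for $m$ sufficiently divisible. Because any divisorial sheaf on a normal variety is a nonzero subsheaf of the constant sheaf of rational functions, and $k$ is algebraically closed, relative global generation produces a nonzero rational section, yielding an effective Weil $\Q$-divisor $E \sim_{f,\Q} m(bD - A)$; this gives $D \sim_{f,\Q} \tfrac{1}{b} A + \tfrac{1}{mb} E$, exhibiting $D$ as $f$-big.

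Items (i), (j), and (k) reduce to the same bookkeeping. The nef/almost-ample half uses the Cartier transfer identity to combine the agg-ness of a Weil summand with the global generation of a locally free ample summand: if $\regsh_X(mbD)$ is globally generated and $\regsh_X(mH)$ is a globally generated invertible sheaf with $H$ ample $\Q$-Cartier, then $\regsh_X(m(bD + H)) \cong \regsh_X(mbD) \otimes \regsh_X(mH)$ is globally generated. The finitely generated upgrade in (i) and (j) reduces to showing that $\Rscr(X/U, D + A)$ is finitely generated whenever $\Rscr(X/U, D)$ is and $A$ is $\Q$-Cartier, a standard graded Veronese argument since each piece is a tensor twist by an invertible sheaf. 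The main obstacle throughout is respecting the Cartier hypothesis on the added divisor: Remark~\ref{rk:failadditivity} emphasizes that the tensor of two reflexive hulls does not recover the reflexive hull of the sum, so every combination of two positivity conditions must route through an intermediate $\Q$-Cartier divisor --- this is precisely why Cartierness is assumed on $L$, $A$ in (c), (i), (j), and on $L$ in (d) and (k).
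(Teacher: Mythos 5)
Your outline follows essentially the same strategy as the paper's proof: every combination of positivity properties is routed through a $\Q$-Cartier summand via the identity $\regsh_X(mD)\otimes\regsh_X(mL)\cong\regsh_X(m(D+L))$ for $L$ Cartier, and the individual items are chained together in nearly the same order. The small deviations are harmless and correct: for (c) you observe that an agg $\Q$-Cartier divisor is relatively semi-ample, so $L+A$ is again relatively ample and the definition of nefness applies directly to $D+(L+A)$, whereas the paper tensors $\regsh_X(m(D+A))$ with $\regsh_X(mL)$; for (e) you go through (f) (agg plus ample $\Q$-Cartier implies nef) rather than setting $L=0$ in (d). Both give the same content. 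One very minor omission: in (b), the asymptotic nature of \emph{ampleness} (as opposed to almost ampleness) also requires that $\Rscr(X,D)$ is finitely generated if and only if $\Rscr(X,aD)$ is, which the paper records with a citation; ``immediate'' slightly undersells this.

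The one genuine gap is in (g). You write that relative global generation of $\regsh_X(m(bD-A))$ ``produces a nonzero rational section, yielding an effective Weil $\Q$-divisor $E\sim_{f,\Q} m(bD-A)$.'' A nonzero rational section exists for any divisorial sheaf and carries no positivity information; what is needed is a \emph{regular} section on all of $X$, or at least a rational function $\phi$ with $\div(\phi)+m(bD-A)$ effective globally, and relative global generation only hands you sections of $\regsh_X(m(bD-A))$ over $f^{-1}(V)$ for affine opens $V\subseteq U$. Such a section could a priori acquire poles along prime divisors of $X$ lying over $U\setminus V$. The paper closes exactly this hole: it chooses $V$ affine so that $X\setminus f^{-1}(V)$ has codimension at least $2$ in $X$ and then extends the section across that locus using the $S_2$ (reflexive) property of $\regsh_X(mbD-mA)$, obtaining a genuinely global section whose divisor is the required effective $E$. (In the absolute case $U=\Spec k$ the issue disappears, but the statement is relative.) Your sketch as written does not contain this extension step, and without it the effectivity of $E$ is not established; you should either add the codimension-two extension argument or, alternatively, absorb the poles over $U\setminus V$ by twisting with $f^*$ of a sufficiently positive Cartier divisor on $U$ supported on $U\setminus V$, which changes nothing modulo $\sim_{f,\Q}$.
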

\begin{proof}\begin{enumerate}
\item Well known.
\item Clear. Notice that, if $D$ is a Weil divisor, $\Rscr(X,D)$ is finitely generated if and only if, for some $a>0$, $\Rscr(X,aD)$ is finitely generated, \cite[5.68]{hk}.
\item Let $A$ be an ample Cartier divisor. By hypothesis $\regsh_X(m(D+A))$ is relatively globally generated for all $1||m$. Since $L$ is relatively agg, $\regsh_X(m(D+A))\otimes\regsh_X(mL)\cong\regsh_X(m(D+L+A))$ is relatively agg.
\item The if is immediate. Let us show the only if. Let $L$ be any $\Q$-Cartier divisor. Without loss of generality, we can assume that is Cartier. Let $A$ be a relatively ample $\Q$-Cartier divisor such that $A+L$ is relatively ample. By definition there exists a $b>0$ such that $bD-A-L$ is relatively nef. Since $A$ is relatively agg, $bD-L=(bD-A-L)+A$ is relatively nef.
\item In the previous part, choose $L=0$.
\item Let $A$ be relatively ample $\Q$-Cartier divisor, and let $b>0$ such that $bD-A$ is relatively nef. By definition, then, for every relatively ample $\Q$-Cartier divisor $L$, $bD-A+L$ is relatively agg. Setting $A=L$, we get that $bD$ is relatively agg, that is, $D$ is relatively agg.
\item Let $A$ be a $\Q$-Cartier $f$-ample divisor, and let $b>0$ such that $bD-A$ is $f$-agg. Let $m>0$ such that $m(bD-A)$ is relatively globally generated. Let $V\subseteq U$ be an open affine subscheme, such that the complement of $f^{-1}V$ has codimension at least $2$ in $X$. Then, $m(bD-A)$ has a section on $f^{-1}(V)$, which extends to a global section (the sheaf $\regsh_X(mbD-mA)$ is $S_2$). Let $M\in|\regsh_X(mbD-mA)|$, $M\geq0$, constructed as explained before; then $D\sim_{f,\Q}\frac{1}{mb}A+\frac{1}{m}M$.
\item Clear, since if you add a relatively ample Cartier divisor to a relatively big divisor, you still have a relatively big Weil divisor.
\item Since all the conditions are asymptotic, it is enough to work with Cartier divisors. In particular, we can assume that $A$ is Cartier. Let $L_1$ be a relatively ample Cartier divisor, and let $b>0$ such that $bA-L_1$ is relatively very ample. Then, for every relatively ample $\Q$-Cartier $\Q$-divisor $L_2$, $bA-L_1+L_2$ is relatively agg. Thus $bD+(bA-L_1+L_2)=b(D+A)-L_1+L_2$ is relatively agg. By definition, this means that $bD+bA-L_1$ is relatively nef. Therefore, $D+A$ is relatively almost ample. For the second part of the statement, notice that $\regsh_X(m(D+A))=\regsh_X(mD)\otimes\regsh_X(mA)$ since $A$ is Cartier.
\item Let $L$ be a relatively ample $\Q$-Cartier divisor and let $b>0$ such that $bD-L$ is relatively nef. Thus $bD-L+bA=b(D+A)-L$ is relatively nef, which implies that $D+A$ is relatively almost ample. As in the previous part, the finite generation of the sheaf algebra is preserved by adding a Cartier divisor, and this concludes the proof.
\item The if is clear. Let $D$ be (almost) relatively ample and let $L$ be $\Q$-Cartier relatively ample. Then there exists $b>0$ such that $bD-3L$ is relatively nef. Hence $bD-2L=bD-3L+L$ is relatively agg (by definition of relative nefness), and thus $bD-L=bD-2L+L$ is relatively (almost) ample.
\end{enumerate}
\end{proof}
\begin{ex} Let us fix an algebraically closed field $k$. Let $X$ be the quadric cone $X=\{xy-wz=0\}$ in $\Pspace^4$, which is a cone over an embedding of type $(1,1)$ of $Q\cong\Pspace^1\times\Pspace^1$. Let $C_D$ be a divisor on $X$, which is a cone over a divisors $D$ in $Q$, of type $(a,b)$. Using the above lemma it is not hard to see that $C_D$ is almost ample if and only if $a,b>0$, that is, if and only if $D$ is ample, while $C_D$ is nef if and only if $a,b\geq0$, that is, if and only if $D$ is nef. If $k=\C$, then $X$ is log terminal, and thus all algebras of local sections are finitely generated, and hence we can say that $C_D$ is ample if and only if $D$ is ample.
\end{ex}
\begin{ex}\label{ex:almost ample not ample} We will give an example of a divisor which is almost ample, but not ample. Let $X$ be a normal variety with isolated singularities and let $D$ be a Weil divisor such that $\Rscr(X,D)$ is not finitely generated. For example, $X$ can be the cone of \cite[\S3]{stefano}, and $D=K_X$. Let $A$ be any Cartier ample divisor on $X$. By the above lemma and \cite[5.5]{stefano}, for $b$ sufficiently large $bA+D$ is almost ample. On the other hand, since $A$ is Cartier and $\Rscr(X,D)$ is not finitely generated, $\Rscr(X,bA+D)$ is not finitely generated, showing that $bA+D$ is not ample.
\end{ex}
%
%
%
%
%          CHARACTERIZATIONS
%
%
%
%
\section{Characterizations}
\label{char}
%
%
%
%
%          NEF WEIL DIVISORS
%
%
%
%
The characterizations of this section are of two types. Some of them are the equivalent of well-known results for $\Q$-Cartier divisors, while other investigate the behavior of positivity of Weil divisors under $\Q$-Cartierizations. For this purpose, before starting our discussion, let us recall a well known result, \cite[6.2]{komo}, which we will use extensively.
\begin{lm}\label{lm:themostusefullemmaintheworld} Let $X$ be a normal algebraic variety and let $B$ be a Weil divisor on it. The following are equivalent:
\begin{enumerate}
\item $\Rscr(X,B)$ is a finitely generated $\regsh_X$-algebra;
\item there exists a small, projective birational morphism $f:Y\rightarrow X$ such that $Y$ is normal, and $\bar{B}:=f^{-1}_*B$ is $\Q$-Cartier and $f$-ample.
\end{enumerate}
Moreover, the morphism $f:Y\rightarrow X$ in (b), if it exists, is unique up to isomorphism, namely $Y\cong\Proj_X\Rscr(X,D)$; also, for all $m\geq0$, $f_*\regsh_Y(m\bar{B})=\regsh_X(mB)$.
We will call such a map the \emph{$\Q$-Cartierization} of $B$.
\end{lm}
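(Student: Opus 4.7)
The plan is to prove the equivalence by constructing $Y$ explicitly in one direction and exploiting smallness in the other, with the key ingredient being the standard fact that if $g:Y\to X$ is a small birational morphism between normal varieties, then $g_*\regsh_Y(m\bar{B})=\regsh_X(mB)$ for every $m$: both sides are reflexive and agree on the isomorphism locus of $g$, whose complement has codimension $\geq 2$.

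For (b)$\Rightarrow$(a): given the small map $f:Y\to X$ with $\bar{B}$ $\Q$-Cartier and $f$-ample, the pushforward identification above yields $\Rscr(X,B)=\bigoplus_{m\geq 0}f_*\regsh_Y(m\bar{B})$. Choose $d>0$ so that $d\bar{B}$ is Cartier and $f$-ample; the Veronese subalgebra $\bigoplus_m f_*\regsh_Y(md\bar{B})$ is the section ring of a relatively ample line bundle along a projective morphism, hence finitely generated as an $\regsh_X$-algebra. Finite generation of the Veronese implies finite generation of the full algebra (compare the appeal to \cite[5.68]{hk} in the proof of part (b) of Lemma \ref{lm:immediate properties}).

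For (a)$\Rightarrow$(b): set $Y:=\ProjSc_X\Rscr(X,B)$, which is projective over $X$ by finite generation. After passing to a Veronese $\Rscr^{(d)}(X,B)$ generated in degree one, the tautological sheaf $\regsh_Y(1)$ is a relatively ample invertible sheaf. On the open $U\subseteq X$ where $B$ is $\Q$-Cartier, $\Rscr^{(d)}(X,B)|_U$ is locally the symmetric algebra on a single local equation for $dB$, so $f$ restricts to an isomorphism over $U$; since the non-$\Q$-Cartier locus on the normal variety $X$ has codimension $\geq 2$, this shows that $f$ is small and birational, and then $Y$ is normal (it is $S_2$ as $\ProjSc$ of an integrally closed algebra, and $R_1$ because it agrees with $X$ in codimension one). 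The strict transform $\bar{B}:=f^{-1}_*B$ satisfies $d\bar{B}\sim\regsh_Y(1)$ as Cartier divisors (equality on $U$, reflexivity across the codimension $2$ complement), so $\bar{B}$ is $\Q$-Cartier and $f$-ample, and $f_*\regsh_Y(m\bar{B})=\regsh_X(mB)$ by the opening observation.

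The uniqueness part of the \emph{moreover} is then automatic: any $f':Y'\to X$ satisfying (b) produces, via smallness, $\Rscr(X,B)=\bigoplus_m f'_*\regsh_{Y'}(m\bar{B}')$, and relative ampleness of $\bar{B}'$ forces $Y'\cong\ProjSc_X\Rscr(X,B)\cong Y$. The main obstacle, and the only step requiring care, is the identification $\regsh_Y(1)\sim d\bar{B}$ on $Y=\ProjSc_X\Rscr^{(d)}(X,B)$ together with normality of $Y$; both rest on an explicit local analysis over the $\Q$-Cartier locus of $B$ in $X$ and then extend across codimension $2$ using that everything in sight is reflexive.
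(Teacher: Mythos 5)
The paper does not actually prove this lemma: it is quoted as a known result with a citation to \cite[6.2]{komo}, so there is no in-paper argument to compare against. Judged on its own, your proposal follows the standard line of that reference; the direction (b)$\Rightarrow$(a) and the ``moreover'' clause are fine as sketched.

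The gap is in (a)$\Rightarrow$(b), at the step ``$f$ restricts to an isomorphism over $U$; since the non-$\Q$-Cartier locus has codimension $\geq 2$, this shows that $f$ is small.'' Being an isomorphism over an open $U\subseteq X$ whose complement has codimension $\geq 2$ \emph{in $X$} does not imply smallness: the blow-up of a point on a surface has exactly this property and is not small. Smallness means that $Y=\Proj_X\Rscr(X,B)$ carries no $f$-exceptional prime divisor, i.e.\ nothing of codimension $1$ \emph{in $Y$} lies over $X\setminus U$, and this is precisely the nontrivial content of the lemma --- it is what distinguishes the divisorial algebra $\bigoplus_m\regsh_X(mB)$, whose graded pieces are reflexive and hence determined in codimension $1$ on $X$, from, say, the Rees algebra of an ideal, whose $\Proj$ is a genuine blow-up. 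To close the gap one supposes $E\subset Y$ is a prime $f$-exceptional divisor and uses that, for $1||m$, $\regsh_Y(m)$ is relatively generated by $f^*f_*\regsh_Y(m)$ with $f_*\regsh_Y(m)=\regsh_X(mB)$: the generating sections are rational functions $\phi$ with $\div(\phi)+mB\geq 0$ on $X$, a condition that only constrains $\phi$ at codimension-$1$ points of $X$ and hence says nothing along $E$, and one derives a contradiction with the $f$-ampleness of $\regsh_Y(1)$ restricted to $E$ (this is the computation carried out in \cite[6.2]{komo}). Note also that your subsequent steps --- normality of $Y$ via ``$R_1$ because it agrees with $X$ in codimension one,'' and the identification $\regsh_Y(1)\cong\regsh_Y(d\bar{B})$ ``by reflexivity across the codimension $2$ complement'' --- both quietly use that the complement of $f^{-1}(U)$ has codimension $\geq 2$ \emph{in $Y$}, which is again smallness, so they inherit the same gap.
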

\subsection{Nef Weil divisors}
\emph{}

The following is a generalization of \cite[1.4.10]{lazarsfeld} to our setting.

\begin{prop} Let $f:X\rightarrow U$ be a projective morphism of normal quasi-projective varieties over $k$. Let $D$ be a divisor on $X$ and let $H$ be an $f$-ample $\Q$-Cartier $\Q$-divisor. Then $D$ is $f$-nef if and only if, for all sufficiently small $\e\in\Q$, $0<\e\ll1$, $D+\e H$ is $f$-ample.
\end{prop}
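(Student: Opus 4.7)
The plan is to unpack both implications using Lemma \ref{lm:immediate properties}, particularly the criterion for almost amplitude in part (d), the fact that almost ample divisors are $f$-nef in part (e), and the closure property that adding a relatively agg $\Q$-Cartier divisor to an $f$-nef divisor preserves $f$-nefness in part (c). Throughout, I rely on the standard fact that the ample cone of $\Q$-Cartier $\Q$-divisors is open, so small perturbations of an ample $\Q$-Cartier divisor by multiples of $H$ remain ample.

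For the direction $(\Leftarrow)$, suppose $D+\e H$ is $f$-ample for all sufficiently small rational $\e>0$. Given an arbitrary $f$-ample $\Q$-Cartier $\Q$-divisor $A$, I would choose $\e>0$ small enough that both the hypothesis applies and $A-\e H$ remains $f$-ample $\Q$-Cartier. By Lemma \ref{lm:immediate properties}(e), $D+\e H$ is $f$-nef. Applying the definition of $f$-nef to $D+\e H$ together with the ample $\Q$-Cartier divisor $A-\e H$ yields that
\[
(D+\e H)+(A-\e H)=D+A
\]
is $f$-agg. Since $A$ was arbitrary, $D$ is $f$-nef.

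For the direction $(\Rightarrow)$, suppose $D$ is $f$-nef and fix any rational $\e>0$. By Lemma \ref{lm:immediate properties}(d) it suffices, for almost amplitude, to produce for each $\Q$-Cartier divisor $L$ a positive integer $b$ with $b(D+\e H)-L$ relatively $f$-nef. For $b\gg 0$ the divisor $b\e H-L$ is $f$-ample $\Q$-Cartier, hence in particular relatively agg. Writing
\[
b(D+\e H)-L=bD+(b\e H-L),
\]
the first summand is $f$-nef by the asymptotic nature of nefness (Lemma \ref{lm:immediate properties}(b)), while the second is a relatively agg $\Q$-Cartier divisor, so Lemma \ref{lm:immediate properties}(c) shows the sum is $f$-nef. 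Thus $D+\e H$ is almost $f$-ample.

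The main subtlety I anticipate is passing from almost $f$-amplitude to $f$-amplitude in the $(\Rightarrow)$ direction, which demands that $\Rscr(X/U,D+\e H)$ be finitely generated. The argument above in fact yields almost $f$-amplitude for \emph{every} $\e>0$, with no smallness required; the smallness in the statement is used only in the $(\Leftarrow)$ direction for openness of the ample cone. Finite generation of $\Rscr(X/U,D+\e H)$ does not come for free from $f$-nefness of $D$, but follows, for instance, whenever $\Rscr(X/U,D)$ is already finitely generated, via Lemma \ref{lm:immediate properties}(i) after clearing denominators so that $\e H$ is Cartier; this is the step that I would expect to need the most care in a complete writeup.
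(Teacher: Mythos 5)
Your argument is correct and follows the same perturbation strategy as the paper's, so I will mainly point out where the two diverge. The backward direction is essentially identical: both you and the paper replace an arbitrary $f$-ample $\Q$-Cartier $A$ by $A-\e H$ (open-ness of the ample cone) and recombine; the paper phrases the recombination via the tensor-product decomposition $\regsh_X(m(D+A))\cong\regsh_X(m(D+\e H))\otimes\regsh_X(m(A-\e H))$, while you simply feed $A-\e H$ into the definition of $f$-nefness of $D+\e H$ --- equivalent, and arguably cleaner. In the forward direction the paper rescales $H$ to reduce to showing $D+H$ is $f$-ample and writes $D+H=(D+\delta A)+(H-\delta A)$ with $\delta$ small, concluding by Lemma \ref{lm:immediate properties}(\ref{it:D+A 1}); you instead use the criterion of Lemma \ref{lm:immediate properties}(d) together with the decomposition $b(D+\e H)-L=bD+(b\e H-L)$ and parts (b) and (c). Both decompositions work, and yours makes explicit that no smallness of $\e$ is needed for this implication. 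Finally, the finite-generation caveat you flag is genuine and not an artifact of your route: the statement asserts $f$-amplitude, which by definition requires $\Rscr(X/U,D+\e H)$ to be finitely generated, and the paper's own proof does not supply this either --- it concludes via Lemma \ref{lm:immediate properties}(\ref{it:D+A 1}), whose ``relatively ample'' (as opposed to ``relatively almost ample'') conclusion explicitly presupposes finite generation of the section ring. Without an additional hypothesis (e.g.\ that $\Rscr(X/U,D)$ is finitely generated, or that one only claims almost amplitude), both arguments prove the forward direction only up to ``almost $f$-ample''; your explicit acknowledgment of this is a point where you are more careful than the source.
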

\begin{proof} Let us assume that $D+\e H$ is $f$-ample for all $\e\in\Q$, $0<\e\ll1$. Let $A$ be an $f$-ample $\Q$-Cartier $\Q$-divisor. For rational positive $\e$ sufficiently small, $A-\e H$ is $f$-ample. Since $H$ is $\Q$-Cartier, for all positive $1||m$,
$$
\regsh_X(m(D+A))=\regsh_X(m((D+\e H)+(A-\e H)))\cong\regsh_X(m(D+\e H))\otimes\regsh_X(m(A-\e H)).
$$
Since both $D+\e H$ and $A-\e H$ are $f$-ample, they are $f$-agg, and thus so is $D+A$. By definition, $D$ is $f$-nef.

Conversely, let $D$ be $f$-nef. By substituting $H$ with $\e H$, we reduce to prove that $D+H$ is $f$-ample. %Let us assume that $H$ is $\Q$-Cartier. Then $D+H/2$ must be $f$-agg, and thus $D=D+H/2+H/2$ is $f$-ample. Let us assume that $D$ is Cartier. 
Let $A$ be $\Q$-Cartier and $f$-ample. For $1\gg\delta>0$, $H-\delta A$ is $f$-ample; moreover $D+\delta A$ is $f$-agg. By the hypothesis, $H-\delta A$ is $\Q$-Cartier; hence $D+H=(D+\delta A)+(H-\delta A)$ is $f$-ample.
\end{proof}
\begin{thm}\label{thm:nef<=>qnef} Let $X\rightarrow U$ be a projective morphism of normal projective varieties over an algebraically closed field $k$. If $g:Y\rightarrow X$ is a small projective birational map over $U$ such that $\bar{D}:=g_*^{-1}D$ is $\Q$-Cartier and $g$-ample, then $D$ is nef over $U$ if and only if $\bar{D}$ is nef over $U$.
\end{thm}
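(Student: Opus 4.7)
The plan is to bridge $X$ and $Y$ using the $\Q$-Cartierization $g$, exploiting two complementary consequences of $\bar D$ being $g$-ample: first, $g_*\regsh_Y(m\bar D) = \regsh_X(mD)$ by Lemma \ref{lm:themostusefullemmaintheworld}, so global sections over affine opens of $U$ are preserved under $g_*$; second, the natural map $g^*\regsh_X(mD) \twoheadrightarrow \regsh_Y(m\bar D)$ is surjective for $1\|m$, since $\regsh_Y(m\bar D)$ is $g$-globally generated by $g$-ampleness. Setting $h := f \circ g$, these let me transfer global generation in both directions, up to $\Q$-Cartier corrections. I will also use the asymptotic property of Lemma \ref{lm:immediate properties}(b) and the elementary fact that global generation is preserved when tensoring a reflexive sheaf with a globally generated invertible sheaf.

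For ``$\bar D$ is $h$-nef $\Rightarrow D$ is $f$-nef'': given an $f$-ample $\Q$-Cartier $A$ on $X$, I want $D+A$ to be $f$-agg. Because $\bar D$ is $g$-ample and $g^*A$ is $h$-nef (it is pulled back from an $f$-ample divisor), a sufficiently small rational $\e>0$ makes $H:=\e\bar D + g^*A$ into an $h$-ample $\Q$-Cartier divisor on $Y$ (check on $h$-fiber curves, separating $g$-contracted from horizontal ones). Applying $h$-nefness of $\bar D$ to $H$ gives that $\bar D + H = (1+\e)\bar D + g^*A$ is $h$-agg. Pushing forward by $g_*$ and using the projection formula (legitimate since $A$ is $\Q$-Cartier), $(1+\e)D + A$ is $f$-agg. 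The asymptotic property then yields that $D + \tfrac{1}{1+\e}A$ is $f$-agg. Decomposing $D + A = (D + \tfrac{1}{1+\e}A) + \tfrac{\e}{1+\e}A$ exhibits $D+A$ as a sum of an $f$-agg Weil divisor and an $f$-agg $\Q$-Cartier divisor; since the second summand is $\Q$-Cartier, tensoring preserves generation, and $D+A$ is $f$-agg.

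For the converse, assume $D$ is $f$-nef and let $A_Y$ be an $h$-ample $\Q$-Cartier divisor on $Y$. Fix any $f$-ample $\Q$-Cartier $A_X^0$ on $X$; for rational $t>0$ sufficiently small, $A_Y - tg^*A_X^0$ remains $h$-ample on $Y$ (openness of the ample cone, or directly: $g^*A_X^0$ vanishes on $g$-contracted curves and is dominated by $A_Y$ on horizontal ones). Setting $A_X := tA_X^0$, the $f$-nefness of $D$ gives that $\regsh_X(m(D+A_X))$ is $f$-gg for $1\|m$. Pulling back to $Y$ and composing with the surjection $g^*\regsh_X(mD)\twoheadrightarrow\regsh_Y(m\bar D)$ produces a surjection from an $h$-gg sheaf onto $\regsh_Y(m(\bar D + g^*A_X))$, so the latter is $h$-gg, i.e.\ $\bar D + g^*A_X$ is $h$-agg. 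Finally $\bar D + A_Y = (\bar D + g^*A_X) + (A_Y - g^*A_X)$ is a sum of an $h$-agg $\Q$-Cartier divisor and an $h$-ample $\Q$-Cartier divisor, hence $h$-agg; this shows $\bar D$ is $h$-nef.

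The main technical obstacle is rigorously justifying the transfer of global generation through $g_*$ in the forward direction (and through $g^*$ composed with the adjunction counit in the backward direction) when dealing with reflexive-but-not-invertible divisorial sheaves. The $g$-ampleness of $\bar D$ is precisely what bridges this gap: it produces the surjection $g^*\regsh_X(mD)\twoheadrightarrow\regsh_Y(m\bar D)$ used in the reverse direction, and in the forward direction it is what makes $g_*$ of $h$-global generation on $Y$ propagate to $f$-global generation on $X$, using that $\regsh_X(mD)$ is $S_2$ and agrees with $g_*\regsh_Y(m\bar D)$ on the iso-locus of $g$ (whose complement has codimension at least two).
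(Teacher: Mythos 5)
Your implication ``$D$ nef over $U$ $\Rightarrow$ $\bar D$ nef over $U$'' is sound and is essentially the mechanism the paper uses: pull back the relative evaluation surjection and compose with $g^*\regsh_X(mD)\twoheadrightarrow\regsh_Y(m\bar D)$, which holds for $1||m$ because $\bar D$ is $g$-ample and $g_*\regsh_Y(m\bar D)=\regsh_X(mD)$; your decomposition $\bar D+A_Y=(\bar D+g^*A_X)+(A_Y-g^*A_X)$ is an acceptable substitute for the paper's appeal to closedness of the nef cone.

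The other implication has a genuine gap at the step ``Pushing forward by $g_*$ \dots\ $(1+\e)D+A$ is $f$-agg.'' The projection formula does identify $h_*\regsh_Y(m((1+\e)\bar D+g^*A))$ with $f_*\regsh_X(m((1+\e)D+A))$, but $g_*$ does not preserve surjectivity: writing $\mathscr{K}$ for the kernel of the evaluation map $h^*h_*\regsh_Y(m((1+\e)\bar D+g^*A))\to\regsh_Y(m((1+\e)\bar D+g^*A))$, the obstruction to surjectivity of $f^*f_*\regsh_X(m((1+\e)D+A))\to\regsh_X(m((1+\e)D+A))$ sits inside $R^1g_*\mathscr{K}$, which has no reason to vanish. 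Your proposed fix --- that $\regsh_X(m(1+\e)D)$ is $S_2$ and agrees with $g_*\regsh_Y(m(1+\e)\bar D)$ off the non-isomorphism locus $Z$ (of codimension $\geq 2$) --- only shows that the saturation (reflexive hull) of the image of the evaluation map equals the whole sheaf; it does not show the image itself is the whole sheaf at points of $Z$ (compare $\mathfrak{m}_v\subsetneq\regsh_X$, two sheaves agreeing in codimension one). This is exactly the phenomenon the paper warns about before the Global generation proposition: a non-$\Q$-Cartier divisor on a cone all of whose multiples pass through the vertex. The paper does not close this step by a formal pushforward argument either; it reduces to an affine base, invokes \cite[2.15]{keeler} to convert relative nefness into absolute nefness, and then cites the proof of \cite[4.3]{stefano2}, which is where the real work of generating the stalk of $\regsh_X(m(D+A))$ at points of $Z$ by global sections takes place. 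To repair your argument you must either reproduce that input or cite it.
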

\begin{rk} This is the relative and characteristic free version of \cite[4.3]{stefano2}. The strategy of our proof is to reduce to an affine base and then to use the \cite[4.3]{stefano2}. We point out that, although the referred result is only stated in characteristic zero, its proof works over any algebraically closed field.
\end{rk}
\begin{proof} Let $D$ be nef over $U$. By definition, for every Cartier divisor $A$ ample over $U$, $\regsh_X(m(D+A))$ is relatively globally generated (over $U$) for $1||m$. Since $\bar{D}$ is $g$-ample, we also have that $\regsh_Y(m\bar{D})$ is globally generated over $X$ for $1||m$. We claim that this implies that $\regsh_Y(m(\bar{D}+g^*A))$ is globally generated over $U$. Since both the assumption and the claim (and the properties of the map $g$ and the divisor $\bar{D}$) are local on the base $U$, we can assume $U$ to be affine. Thus we can assume that $\regsh_X(m(D+A))$ is globally generated and we will show that so is $\regsh_Y(m(\bar{D}+g^*A))$.  This is proven in \cite[4.3]{stefano2}. Going back to the relative setting, we showed that $\regsh_Y(m(\bar{D}+g^*A))$ is relatively globally generated over $U$; thus it is relatively nef. Since nefness for $\Q$-Cartier divisors is a closed property, \cite[1.4.23]{lazarsfeld}, $\bar{D}$ is relatively nef.

Now let us suppose that $\bar{D}$ is relatively nef over $U$, and let $A$ be a Cartier divisor on $X$, ample over $U$. The claim is that, for $1||m$, $\regsh_X(m(D+A))$ is globally generated over $U$. As in the previous part, we can assume that $U$ is affine. Because of \cite[2.15]{keeler}, nefness is an absolute notion, so $\bar{D}$ is nef. Then, the same proof of \cite[4.3]{stefano2} shows that $\regsh_X(m(D+A))$ is globally generated, thus concluding our proof.
\end{proof}
%
%
%
%
%          AMPLE WEIL DIVISORS
%
%
%
%
\subsection{Ample Weil divisors}
\begin{prop}[see \cite{har}, II.7] Let $X\rightarrow U$ be a projective morphism of normal quasi-projective varieties and let $D$ be a relatively almost ample Weil $\Q$-divisor. For any coherent sheaf $\Fscr$ and for all positive $1||m$, $\regsh_X(mD)\otimes\Fscr$ is relatively globally generated.

Conversely, let $D$ be a Weil $\Q$-divisor such that $\Rscr(X,D)$ is finitely generated. If for any coherent sheaf $\Fscr$ and for all positive $1||m$ the sheaf $\Fscr\otimes\regsh_X(mD)$ is relatively globally generated, then $D$ is relatively ample.
\end{prop}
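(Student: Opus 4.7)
The plan is to mirror the classical Cartier proof of Hartshorne II.7.6, handling the two directions separately. For the first direction, assume $D$ is almost $f$-ample, fix a $\Q$-Cartier $f$-ample divisor $A$ on $X$, and let $\Fscr$ be coherent. Classical Serre's theorem for the projective morphism $f$ and the Cartier ample $A$ gives $n\gg 0$ such that $\Fscr\otimes\regsh_X(nA)$ is relatively globally generated; twisting by the line bundle $\regsh_X(-nA)$ yields a surjection $\regsh_X(-nA)^{\oplus N}\twoheadrightarrow\Fscr$, and then tensoring with $\regsh_X(mD)$ (using that $A$ is Cartier to identify the tensor with $\regsh_X(mD-nA)$) reduces the claim to showing $\regsh_X(mD-nA)$ is relatively globally generated for $1||m$. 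For this, apply Lemma \ref{lm:immediate properties}(k) with $L=nA$ to find $b>0$ with $bD-nA$ almost $f$-ample, hence $f$-agg by part (f); then for $1||j$ the identity
$$\regsh_X(jbD-nA)\cong\regsh_X(j(bD-nA))\otimes\regsh_X((j-1)nA)$$
exhibits the target as the tensor of a relatively globally generated reflexive sheaf and a globally generated line bundle, and setting $m=jb$ closes the first direction.

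For the converse, the strategy is to reduce to the classical Cartier characterization on the $\Q$-Cartierization. By Lemma \ref{lm:themostusefullemmaintheworld}, the finite generation of $\Rscr(X,D)$ provides a small projective birational map $g:Y\to X$ with $\bar D:=g_*^{-1}D$ being $\Q$-Cartier, $g$-ample, and satisfying $g_*\regsh_Y(m\bar D)=\regsh_X(mD)$. Once $\bar D$ is shown to be $(f\circ g)$-ample on $Y$ in the classical Cartier sense, amplitude of $D$ follows: for any $\Q$-Cartier $f$-ample $A$ on $X$, the strict transform of $bD-A$ is the Cartier divisor $b\bar D-g^*A$ (still $g$-ample for $b>0$), which is $(f\circ g)$-ample, and hence $(f\circ g)$-nef, for $b\gg 0$; Theorem \ref{thm:nef<=>qnef} then gives $bD-A$ is $f$-nef, so $D$ is $f$-ample.

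To establish the $(f\circ g)$-amplitude of $\bar D$, let $\Gscr$ be coherent on $Y$ and pick $k\gg 0$ so that $\Gscr\otimes\regsh_Y(k\bar D)$ is $g$-globally generated; set $\Fscr:=g_*(\Gscr\otimes\regsh_Y(k\bar D))$. By hypothesis, $\Fscr\otimes\regsh_X(mD)$ is relatively globally generated over $U$ for $1||m$. Pulling back along $g$ and composing with the natural surjections $g^*\Fscr\twoheadrightarrow\Gscr\otimes\regsh_Y(k\bar D)$ (from $g$-global generation) and $g^*\regsh_X(mD)=g^*g_*\regsh_Y(m\bar D)\twoheadrightarrow\regsh_Y(m\bar D)$ (both driven by $g$-amplitude of $\bar D$ for $m\gg 0$) yields, via adjunction, that $\Gscr\otimes\regsh_Y((k+m)\bar D)$ is $(f\circ g)$-globally generated for $1||m$ large. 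A standard shifting argument applied to $\Gscr\otimes\regsh_Y(j\bar D)$ for $j=0,\ldots,\ell-1$ then fills in all sufficiently large $N$, producing the classical Cartier amplitude characterization of $\bar D$. The main obstacle is the bookkeeping in this last step: tracking the interplay of the pullbacks and $g^*g_*$-surjections driven by $g$-amplitude, and combining the various divisibilities coming from $1||m$ across shifts of $\Gscr$ into one uniform range of $N$.
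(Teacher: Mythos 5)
Your first direction is the paper's argument in light disguise: the paper writes $\Fscr\otimes\regsh_X(mbD)\cong\Fscr\otimes\regsh_X(mA)\otimes\regsh_X(mbD-mA)$ with $bD-A$ relatively agg, while you first present $\Fscr$ as a quotient of $\regsh_X(-nA)^{\oplus N}$; both rest on the same splitting of $mD$ into an agg Weil part plus a globally generated Cartier part, and both are fine. The converse is where you genuinely diverge. The paper never leaves $X$: it tests the hypothesis only against the single line bundle $\Fscr=\regsh_X(-L)$ for $L$ relatively ample and Cartier, replaces $D$ by a multiple so that $\Rscr(X,D)$ is generated in degree one, uses the resulting surjections $\regsh_X(bD-L)^{\otimes m}\to\regsh_X(m(bD-L))$ to conclude that $bD-L$ is relatively agg, and finishes with Lemma \ref{lm:immediate properties}(\ref{it:D+A 1}). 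You instead pass to the $\Q$-Cartierization, verify the classical Hartshorne--EGA amplitude criterion for $\bar D$ on $Y$ with test sheaves $g_*(\Gscr\otimes\regsh_Y(k\bar D))$, and descend. Your route does work: the composite surjection $h^*f_*(\Fscr\otimes\regsh_X(mD))\twoheadrightarrow\Gscr\otimes\regsh_Y((k+m)\bar D)$ factors through the counit $h^*h_*(-)\to(-)$ by adjunction, which is exactly relative global generation over $U$; and the divisibility bookkeeping you flag at the end is not a genuine obstacle, since the classical criterion only requires, for each coherent test sheaf (in practice the ideal sheaf of the complement of a trivializing affine), a \emph{single} $N$ with $\Gscr\otimes\regsh_Y(N\bar D)$ globally generated in order to produce an affine $Y_s$ through a given point, so no uniform range of $N$ is needed. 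The trade-offs: the paper's proof is shorter, shows that the hypothesis is only needed for invertible $\Fscr$, and avoids Theorem \ref{thm:nef<=>qnef}, which is stated for projective varieties over an algebraically closed field, so it retains the quasi-projective, arbitrary-field generality in which the proposition is stated; your proof, as written, inherits those extra hypotheses and is longer, but it sidesteps the degree-one-generation reduction (whose identification $\regsh_X(mD)\cong\regsh_X(D)^{\otimes m}$ is really only a surjection onto a reflexive hull) and makes transparent that the statement is the classical amplitude criterion transported along the small map $g$ --- indeed your final descent could be replaced by a direct appeal to Theorem \ref{thm:ample<=>qample}.
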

\begin{proof} Notice that, for any relatively ample $\Q$-Cartier divisor $A$, there exists $b>0$ such that $bD-A$ is relatively agg. Indeed, there exists $b>0$ such that $bD-2A$ is relatively nef (by definition). But then $bD-A=(bD-2A)+A$ is relatively agg. Since $A$ is relatively ample, $\Fscr\otimes\regsh_X(mA)$ is relatively globally generated for positive $1||m$. Thus
\begin{eqnarray*}
\Fscr\otimes\regsh_X(mbD)\cong\Fscr\otimes\regsh_X(mA)\otimes\regsh_X(mbD-mA)
\end{eqnarray*}
is relatively globally generated for positive $1||m$.

Now let us prove the converse. Since we are working with asymptotic conditions, we can substitute $D$ with $aD$ for a positive $1||a$, so that $\Rscr(X,D)$ is generated in degree $1$, that is, $\regsh_X(mD)\cong\regsh_X(D)^{\otimes m}$ for all $m\geq1$. Let $L$ be any relatively ample Cartier divisor, and let $b>0$ such that $\regsh_X(bD-L)$ is relatively globally generated. Since $\regsh_X(bD-L)^{\otimes m}\cong\regsh_X(m(bD-L))$, $bD-L$ is relatively agg. Thus, $bD=(bD-L)+L$ is relatively almost ample, by lemma \ref{lm:immediate properties}(\ref{it:D+A 1}). Since $\Rscr(X,bD-L)$ is finitely generated, because $L$ is Cartier and $\Rscr(X,bD)$ is finitely generated, then $bD$ is relatively ample. Hence, $D$ is relatively ample.
\end{proof}
\begin{thm}\label{thm:ample<=>qample} Let $X\rightarrow U$ be a projective morphism of normal projective varieties over an algebraically closed field $k$. If $g:Y\rightarrow X$ is a small projective birational map over $U$ such that $\bar{D}:=g_*^{-1}D$ is $\Q$-Cartier and $g$-ample, then $D$ is ample over $U$ if and only if $\bar{D}$ is ample over $U$.
\end{thm}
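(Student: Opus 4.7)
The plan is to reduce both implications to Theorem \ref{thm:nef<=>qnef} together with the following observation: since $g$ is small, Lemma \ref{lm:themostusefullemmaintheworld} gives $g_*\regsh_Y(m\bar{D})=\regsh_X(mD)$ for all $m\geq 0$, so applying $f_*$ yields the equality of graded $\regsh_U$-algebras $\Rscr(X/U,D)=\Rscr(Y/U,\bar{D})$. Hence finite generation on one side is equivalent to finite generation on the other (and is automatic for $\bar{D}$ classically $(f\circ g)$-ample). Moreover, since $\bar{D}$ is $\Q$-Cartier, Lemma \ref{lm:immediate properties}(a) equates our notion of amplitude for $\bar{D}$ with the classical one, so the task reduces to showing that $D$ is almost $f$-ample if and only if $\bar{D}$ is classically $(f\circ g)$-ample.

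The implication ``$\bar{D}$ ample $\Rightarrow$ $D$ almost ample'' is the easy direction. Given any $f$-ample $\Q$-Cartier divisor $A$ on $X$, classical ampleness of $\bar{D}$ produces $b>0$ such that $b\bar{D}-g^*A$ is $(f\circ g)$-ample, hence $(f\circ g)$-nef. Since $g$ is small, $g_*^{-1}A=g^*A$, so this divisor is exactly $g_*^{-1}(bD-A)$; it is $\Q$-Cartier, and it is $g$-ample because its intersection with any $g$-contracted curve $C$ equals $b\bar{D}\cdot C>0$. Theorem \ref{thm:nef<=>qnef} then gives that $bD-A$ is $f$-nef, which is exactly the almost-ampleness of $D$.

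The implication ``$D$ almost ample $\Rightarrow$ $\bar{D}$ almost ample'' is the harder direction, and I expect this to be the main obstacle. Let $B$ be an arbitrary $(f\circ g)$-ample $\Q$-Cartier divisor on $Y$; I must produce $b'>0$ with $b'\bar{D}-B$ $(f\circ g)$-nef. The difficulty is that the hypothesis only controls divisors of the form $bD-A$ that come from $X$, while $B$ need not be a pullback. My plan is an additive decomposition exploiting the $g$-ampleness of $\bar{D}$. First, pick $c>0$ such that $c\bar{D}-B$ is $g$-ample. Next, fix any $f$-ample $\Q$-Cartier $A$ on $X$; by the standard fact that a $g$-ample divisor plus a sufficiently large pullback of a base-ample divisor is $(f\circ g)$-ample, there exists $n\gg 0$ with $(c\bar{D}-B)+ng^*A$ $(f\circ g)$-ample, hence $(f\circ g)$-nef. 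On the other hand, applying almost-ampleness of $D$ to $A$ gives $b>0$ with $bD-A$ $f$-nef, and Theorem \ref{thm:nef<=>qnef} (whose hypotheses on $b\bar{D}-g^*A$ are verified exactly as in the easy direction above) yields that $b\bar{D}-g^*A$ is $(f\circ g)$-nef, so that $nb\bar{D}-ng^*A$ is $(f\circ g)$-nef as well. Summing the two $(f\circ g)$-nef $\Q$-Cartier divisors gives
\[
(c\bar{D}-B+ng^*A)+(nb\bar{D}-ng^*A)=(nb+c)\bar{D}-B,
\]
which is $(f\circ g)$-nef; thus $b':=nb+c$ works. This additive trick is the crux of the proof, bridging the gap between pulled-back information on $Y$ and arbitrary $\Q$-Cartier divisors on $Y$.
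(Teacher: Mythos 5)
Your proof is correct and follows essentially the same route as the paper: both directions reduce to Theorem \ref{thm:nef<=>qnef}, combined with writing a suitable multiple of $\bar{D}$ (minus an ample divisor) as the sum of a ``$g$-ample plus large pullback of an $f$-ample'' piece, which is ample over $U$, and a nef-over-$U$ piece supplied by the nefness comparison. The only differences are cosmetic: you run the harder direction through an arbitrary ample $\Q$-Cartier divisor $B$ on $Y$ instead of exhibiting $m\bar{D}$ directly as ample plus nef, and you are more explicit than the paper about the transfer of finite generation of the section algebra.
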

\begin{proof} Let $D$ be ample over $U$. By definition, for every Cartier divisor $H$ ample over $U$, there exists a rational $\e>0$ such that $D-\e H$ is relatively nef. Moreover, $g^{-1}_* (D-\e H)=\bar{D}-\e g^*H$ is $\mathbb{Q}$-Cartier and $g$-ample for $0<\e\ll1$. By theorem \ref{thm:nef<=>qnef} $g^{-1}_* (D-\e H)$ is nef over $U$. Hence, there exists $m\gg0$ such that $g^{-1}_*(mD)= mg^{-1}_* (D-\e H) + g^*(m \e H)$ is ample over $U$.

Similarly, if $\bar{D}=g^{-1}_*D$ is ample over $U$ (and $g$-ample), for every Cartier divisor $H$ on $X$ ample over $U$, there exists a rational $1\gg\e>0$ such that $g^{-1}_*(D) - g^*(\e H)= g^{-1}_*(D - \e H)$ is nef over $U$ and $g$-ample. Then by theorem \ref{thm:nef<=>qnef},  $D - \e H$ is nef over $U$, and this concludes the proof.
\end{proof}
\begin{rk} As observed before, if $D$ is ample, such a map $g:Y\rightarrow X$ must necessarily exist.
\end{rk}
%
%
%
%
%          BIG WEIL DIVISORS
%
%
%
%
\subsection{Big Weil divisors}
\begin{prop}[see \cite{komo}, 3.23]\label{prop:big<=>high rank} Let $f:X\rightarrow U$ be a projective morphism of quasi-projective normal varieties over an algebraically closed field $k$, 
%($U$ irreducible)
and let $D$ be a Weil $\Q$-divisor on $X$. The following are equivalent:
\begin{enumerate}
\item there exist an almost $f$-ample Weil $\Q$-divisor $A$ and an effective divisor $E$ such that $D\sim_{f,\Q} A+E$;
\item there exist an $f$-ample $\Q$-Cartier divisor and an effective divisor $E$ such that $D\sim_{f,\Q} A+E$ ($D$ is $f$-big).
\end{enumerate}
They both imply
\begin{enumerate}
\setcounter{enumi}{2}
\item there exists a $c>0$ such that $\rank f_*\regsh_X(mD)>c m^n$ for all $m>0$, $1||m$, where $n$ is the dimension of the general fiber of $f$.
\end{enumerate}
If $\Rscr(X,D)$ is finitely generated, then they are all equivalent.
\end{prop}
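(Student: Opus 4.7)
My plan is to set up the chain $(b)\Rightarrow(a)$, $(a)\Rightarrow(b)$, $(b)\Rightarrow(c)$, and $(c)\Rightarrow(b)$, with only the last implication requiring the finite-generation hypothesis. The central device for the nontrivial steps will be to pass to a $\Q$-Cartierization via Lemma \ref{lm:themostusefullemmaintheworld} and reduce to the classical theory of bigness for $\Q$-Cartier divisors.

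First, $(b)\Rightarrow(a)$ is free, since by Lemma \ref{lm:immediate properties}(a) an $f$-ample $\Q$-Cartier $\Q$-divisor is almost $f$-ample in our sense. For $(a)\Rightarrow(b)$, I will feed the almost ample piece $A$ into Lemma \ref{lm:immediate properties}(\ref{it:ample=>big}): over an algebraically closed field an almost ample divisor is big, so $A\sim_{f,\Q} A'+E''$ with $A'$ a $\Q$-Cartier $f$-ample divisor and $E''\geq 0$, and substitution gives $D\sim_{f,\Q} A'+(E+E'')$.

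For $(b)\Rightarrow(c)$, after clearing denominators for $1||m$, multiplication by the section cutting out $mE$ produces an injection $f_*\regsh_X(mA)\hookrightarrow f_*\regsh_X(mD)$ (up to $f$-linear equivalence), so it suffices to control the rank on the left. Since $A$ is a $\Q$-Cartier $f$-ample divisor, the usual Hilbert polynomial computation on the general fiber of $f$ (of dimension $n$) gives $\rank f_*\regsh_X(mA)\geq c\cdot m^n$ for $1||m$.

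The nontrivial step, and the main obstacle, is $(c)\Rightarrow(b)$ under finite generation of $\Rscr(X,D)$. The idea is to work on the $\Q$-Cartierization. Let $g:Y\to X$ be the small projective birational morphism provided by Lemma \ref{lm:themostusefullemmaintheworld}, so that $\bar D:=g^{-1}_*D$ is $\Q$-Cartier, $g$-ample, and $f_*\regsh_X(mD)=(f\circ g)_*\regsh_Y(m\bar D)$ for all $m\geq 0$. The rank hypothesis then transfers verbatim to $\bar D$ on $Y$, and since $g$ is birational the general fiber of $f\circ g$ still has dimension $n$. The classical bigness criterion for $\Q$-Cartier divisors (\cite[3.23]{komo}) produces a decomposition $\bar D\sim_{f\circ g,\Q}\bar A+\bar E$ with $\bar A$ a $\Q$-Cartier $(f\circ g)$-ample divisor and $\bar E\geq 0$. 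Pushing down yields $D\sim_{f,\Q}g_*\bar A+g_*\bar E$, so it remains only to verify that $g_*\bar A$ is $f$-ample in our Weil sense, after which the already-proved $(a)\Rightarrow(b)$ completes the argument. This is the delicate point: $\bar A$ is automatically $g$-ample because ampleness over the larger base $U$ restricts to ampleness on the closed subvarieties that make up the fibers of $g$; therefore $g$ is itself the $\Q$-Cartierization of $g_*\bar A$, and Theorem \ref{thm:ample<=>qample} upgrades the $(f\circ g)$-amplitude of $\bar A$ to the $f$-amplitude of $g_*\bar A$.
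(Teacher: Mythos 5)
Your proposal is correct, but for the only substantive implication, (c)$\Rightarrow$(b), you take a genuinely different route from the paper. The paper stays on $X$ and adapts the classical argument of \cite[3.23]{komo} directly to reflexive powers: after normalizing so that $\Rscr(X,D)$ is generated in degree $1$ and the base is affine, it restricts to a general very ample divisor $A$, bounds $\rank H^0(A,\regsh_X(mD)\big|_A)$ by $Cm^{n-1}$ via an embedding of the rank-one torsion-free sheaf $\regsh_X(D)\big|_A$ into $\regsh_A(H)$, and concludes that $\regsh_X(mD-A)$ acquires a section for $m\gg0$, giving $mD\sim A+E$ with $A$ Cartier. You instead transport the rank condition to the $\Q$-Cartierization $g:Y\to X$ via $f_*\regsh_X(mD)=h_*\regsh_Y(m\bar D)$, invoke the classical criterion for the $\Q$-Cartier divisor $\bar D$ there, and push the Kodaira decomposition back down, certifying the amplitude of $g_*\bar A$ by Theorem \ref{thm:ample<=>qample}. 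Both arguments work, and yours is not circular: Theorems \ref{thm:nef<=>qnef} and \ref{thm:ample<=>qample} precede this proposition and do not depend on it. The delicate points you correctly isolate are that $\bar A$, being ample over $U$, is automatically $g$-ample, and that smallness of $g$ gives $g_*^{-1}(g_*\bar A)=\bar A$, so that $g$ is the $\Q$-Cartierization of $g_*\bar A$ by the uniqueness clause of Lemma \ref{lm:themostusefullemmaintheworld}. Two minor caveats: Theorem \ref{thm:ample<=>qample} is stated for projective rather than quasi-projective varieties, though its proof localizes on the base so your application is harmless (the paper itself uses it this way in the Fujita vanishing section); and the paper's direct argument yields the slightly sharper output that the ample summand can be taken to be a very ample Cartier divisor on $X$ itself. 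It is also worth noting that your proof of (c)$\Rightarrow$(b) is essentially the ``if'' direction of Theorem \ref{thm:big<=>qbig} run in reverse; the paper arranges the logic the other way around, deducing \ref{thm:big<=>qbig} as a corollary of this proposition, which is why it needs the self-contained hyperplane-section argument here.
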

\begin{proof} Clearly (b)$\Rightarrow$(a) and (a)$\Rightarrow$(b) is a consequence of \ref{lm:immediate properties}(\ref{it:ample=>big}). Indeed, you can write $A\sim_{f,\Q} A'+E'$, where $A'$ is $f$-ample and $\Q$-Cartier and $E'$ is effective, and thus $D\sim_{f,\Q} A'+(E'+E)$. It is immediate that (b)$\Rightarrow$(c).

So let us assume (c) and $\Rscr(X,D)$ finitely generated, and prove (b). Since both (b) and (c) are asymptotic conditions, we can assume that $\Rscr(X,D)$ is finitely generated in degree $1$, that is, $\regsh_X(mD)\cong\regsh_X(D)^{\cdot m}$. Moreover, using the same technique as in the proof of \ref{lm:immediate properties}(\ref{it:ample=>big}), we can assume that $U=\Spec R$ is affine. Thus, condition (c) becomes
$$
\rank_R H^0(X,\regsh_X(mD))> c m^n,
$$
for some $c>0$ and all $m>0$, $1||m$.
Let $\mathscr{A}$ be an $f$-very ample line bundle, let $A\in|\mathscr{A}|$ be general enough element, reduced and irreducible (by generality of $\mathscr{A}$ and $A\in|\mathscr{A}|$), and let $i:A\rightarrow X$ be the inclusion.

The first step is to notice that $i^*\regsh_X(D)$ is torsion-free and invertible at the generic point of $A$. Both properties can be checked locally on $X$, so that we can assume that $X$ is affine, since A will intersect the open set where $\regsh_X(D)$ is a line bundle.%. Let $X=\Spec B$, $A=\Spec B/f$, for some irreducible element $f\in B$. By hypothesis, $f$ is not a unit. Let $K=\Frac(B)$ and $L=\Frac(B/f)$. Let $\regsh_X(D)\cong M\modsh$. Moreover, we can always twist $\regsh_X(D)$ with a Cartier divisor and assume that $M\subseteq B$. Under these assumptions, $i^*\regsh_X(D)\cong M\otimes_B B/f$. Let $m\in M\otimes_B B/f$ be torsion, $m\neq0$. Then, without loss of generality, $m$ is the image of an element of $M$, which by abuse of notation we can still denote by $m$. Since the image of $m$ in $M\otimes_B B/f$ is torsion, there exists $b\in B/f$, $b\neq0$, such that $bm=0$ in $M\otimes_B B/f$. Again, by abuse of notation, we can think of $b$ as the class of an element $b\in B$. Thus, we have that $f\nmid m$ ($m\in M\subseteq B$) and $f\nmid b$. However, $f|(bm)$. Since $f$ is irreducible, this is a contradiction. Now, let us compute the rank of $M\otimes_B B/f$. Let us assume, by contradiction, that
%$$
%\rank M\otimes_B B/f=\dim_L M\otimes_B B/f\otimes_{B/f}L\geq2.
%$$
%Let $s_1,s_2\in M\otimes_B B/f$ be two $L$-linearly independent elements. As before, we can assume that they are the image of two elements (again denoted by $s_1$ and $s_2$) in $M$. Since $M$ has rank $1$, there exists a non-trivial combination $\kappa_1 s_1+\kappa_2 s_2=0$, with $\kappa_i\in K$, $i=1,2$, and at least one of the $\kappa_i\neq0$. Since $K$ is the fraction field of $A$, we can assume that $\kappa_i\in B$, $i=1,2$. This relation induces a relation in $M\otimes_B B/f$. Since $s_1$ and $s_2$ are linearly independent in $M\otimes_B B/f$, $f$ must divide both $\kappa_1$ and $\kappa_2$. Substituting $\kappa_i$ with $\kappa_i/f$ ($i=1,2$), we obtain a new relation $\kappa_1 s_1+\kappa_2 s_2=0$, with $k_i\in B$, $i=1,2$. Since we can repeat this process infinitely many times, this implies that each $\kappa_i$ is infinitely divisible by $f$. Since $f$ is not a unit and at least one of the $\kappa_i$ is non-zero, this is a contradiction.

The second step is to restrict to $A$, and use induction. Notice that, by generality, $A$ is normal. We have the short exact sequence
$$
0\rightarrow\regsh_X(-A)\rightarrow\regsh_X\rightarrow\regsh_A\rightarrow 0.
$$
Tensoring with $\regsh_X(mD)$ with get
$$
\regsh_X(mD-A)\rightarrow\regsh_X(mD)\rightarrow\regsh_X(mD)\big|_A\rightarrow 0.
$$
Let $\mathscr{K}_m$ be the kernel of $\regsh_X(mD)\rightarrow\regsh_X(mD)\big|_A$, so that we have the short exact sequence
$$
0\rightarrow\mathscr{K}_m\rightarrow\regsh_X(mD)\rightarrow\regsh_X(mD)\big|_A\rightarrow 0
$$
and $\mathscr{K}_m\subseteq\regsh_X(mD-A)$. The above short exact sequence induces a long exact sequence in cohomology:
$$
0\rightarrow H^0(X,\mathscr{K}_m)\rightarrow H^0(X,\regsh_X(mD))\rightarrow H^0(A,\regsh_X(mD)\big|_A).
$$
As observed before, $\regsh_X(D)\big|_A$ is a torsion-free sheaf of rank $1$. Moreover, $\regsh_X(mD)\big|_A\cong\regsh_X(D)^{\cdot m}\big|_A\cong\big(\regsh_X(D)\big|_A\big)^{\cdot m}$. Since $\regsh_X(D)\big|_A$ is torsion-free of rank $1$, there exists a very ample Cartier divisor $H$ on $A$ over $U=\Spec R$ such that $\regsh_X(D)\big|_A\subseteq\regsh_A(H)$ (as in \cite[2.58]{komo}). Then, for any $m$,
$$
\rank_R H^0(A,\regsh_X(mD)\big|_A)\leq\rank_R H^0(A,\regsh_A(mH))<C m^{n-1},
$$
for some $C>0$. Since, by hypothesis
$$
\rank_RH^0(X,\regsh_X(mD))\geq c m^n,
$$
for some $c>0$, for a positive $m\gg0$, $H^0(X,\mathscr{K}_m)$ has a section. Since $\mathscr{K}_m\subseteq\regsh_X(mD-A)$, $H^0(X,\mathscr{K}_M)\subseteq H^0(X,\regsh_X(mD-A))$. Thus, this section determines an effective divisor $E\sim mD-A$, which concludes the proof.
\end{proof}
We recall, see \cite[2.6 and 2.9]{dFH}, that if $f:X\rightarrow Y$ is a proper birational morphisms of normal varieties and $D$ is a divisor on $X$, the \emph{natural pullback} $f^\natural(D)$ is the divisor associated with $\regsh_Y\cdot\regsh_X(-D)$, that is, $f^\natural(D)$ is the divisor on $Y$ such that
$$
\regsh_Y(-f^\natural(D))=(\regsh_Y\cdot\regsh_X(-D))^{\vee\vee}.
$$
The \emph{pullback} $f^*(D)$ is defined by
$$
f^*(D)=\liminf_m\frac{f^\natural(mD)}{m},
$$
where the infimum is taken point-wise on the coefficients.
\begin{lm}\label{lm:pullback of big} Let $f:Y\rightarrow X$ be a proper birational morphism between normal varieties over $U$ and let $D$ be a Weil divisor on $X$. If $D$ is big over $U$, so are $f^\natural(mD)/m$, $-f^\natural(-mD)/m$, $f^*(D)$ and $-f^*(-D)$, for all $m\geq1$.
\end{lm}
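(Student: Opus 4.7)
My plan is to combine the Kodaira-style decomposition of a big divisor with the explicit behavior of $f^\natural$ on a sum of a Cartier and a Weil divisor. First I would use the definition of bigness to write $D\sim_{U,\Q}A+E$ with $A$ a $\Q$-Cartier divisor ample over $U$ and $E$ an effective Weil $\Q$-divisor; applying the relative Kodaira lemma to $A$, I may further assume that $A$ is an integer Cartier divisor. The crucial technical identity I will use is that for $C$ integer Cartier and $W$ integer Weil on $X$, one has $f^\natural(C+W)=f^*C+f^\natural(W)$; this follows directly from the definition, using that $\regsh_X(-C)$ is invertible so that $\regsh_Y\cdot[\regsh_X(-C)\otimes\regsh_X(-W)]=\regsh_Y(-f^*C)\otimes[\regsh_Y\cdot\regsh_X(-W)]$, after which taking double duals produces the claimed identity. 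Moreover, for $W\geq 0$ integer effective, both $f^\natural(W)\geq 0$ and $-f^\natural(-W)\geq 0$, via the inclusions $\regsh_X(-W)\subseteq\regsh_X$ and $\regsh_X\subseteq\regsh_X(W)$, which survive the operations $\regsh_Y\cdot(-)$ and the double dual.

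Choose $n$ such that $nE$ is an integer divisor; then the relation $nD\sim_U nA+nE$ holds integrally (up to pullback from $U$), and the formula yields
\begin{align*}
f^\natural(nD) &\sim_{U,\Q} nf^*A+f^\natural(nE),\\
-f^\natural(-nD) &\sim_{U,\Q} nf^*A+\bigl(-f^\natural(-nE)\bigr).
\end{align*}
Since $A$ is ample over $U$ and $f$ is proper birational, $nf^*A$ is nef and big over $U$ on $Y$; a further application of the relative Kodaira lemma on $Y$ produces $nf^*A\sim_{U,\Q}A'+E'$ with $A'$ an ample $\Q$-Cartier divisor over $U$ and $E'$ effective. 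Adding this to the already effective remainders, both $f^\natural(nD)$ and $-f^\natural(-nD)$ are exhibited as (ample $\Q$-Cartier over $U$)$\,+\,$(effective), hence big over $U$; scaling by $1/n$, so are $f^\natural(nD)/n$ and $-f^\natural(-nD)/n$.

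To extend to arbitrary $m\geq 1$ I would use the subadditivity of $f^\natural$, namely $f^\natural(D_1+D_2)\leq f^\natural(D_1)+f^\natural(D_2)$, which follows from the fractional-ideal inclusion $\regsh_X(-D_1)\cdot\regsh_X(-D_2)\subseteq\regsh_X(-D_1-D_2)$: for $k$ chosen so that $km$ is a multiple of $n$, the previous step gives $f^\natural(kmD)/k$ big, and then $f^\natural(mD)\geq f^\natural(kmD)/k$ expresses $f^\natural(mD)$ as big plus effective, hence big. For $-f^\natural(-mD)/m$ the same strategy is applied to $mD$ itself (which is big over $U$ by lemma \ref{lm:immediate properties}(b)), extracting its own Cartier-plus-effective decomposition after clearing a denominator, and then the lower bound $-f^\natural(-mD)/m\geq -f^\natural(-D)$ provided by the superadditivity of $-f^\natural(-\cdot)$ transfers bigness from the case where an integer decomposition is available. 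Finally, for $f^*(D)=\liminf_m f^\natural(mD)/m$ and $-f^*(-D)=\limsup_m(-f^\natural(-mD)/m)$, passing to the limit in the displayed decompositions yields $f^*(D)\sim_{U,\Q}f^*A+f^*(E)$ and $-f^*(-D)\sim_{U,\Q}f^*A+(-f^*(-E))$, with the tails $f^*(E)$ and $-f^*(-E)$ effective as limits of effective $\Q$-divisors, so each is again big over $U$. The main obstacle I anticipate is carefully tracking integer versus $\Q$-linear equivalence so that the Cartier–Weil splitting formula applies at every scale; the fix, at each stage, is to pass to a multiple that clears the denominators of $E$ before invoking the formula, and then use sub- or super-additivity of $f^\natural$ to return to the desired $m$.
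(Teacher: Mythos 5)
Your overall strategy is the same as the paper's: write $D\sim_{U,\Q}A+E$ with $A$ ample $\Q$-Cartier over $U$ and $E\geq0$, split the natural pullback over the Cartier part via $f^\natural(C+W)=f^*C+f^\natural(W)$, observe that $f^\natural$ and $-f^\natural(-\cdot)$ preserve effectivity, and use that the pullback of a relatively ample divisor under a proper birational map is relatively big. The paper carries this out in one step for $f^\natural(D)$ and simply asserts that the proof proceeds in the same way for the other pullbacks; your extra bookkeeping about integrality, and the passage to the limit for $f^*(D)$ and $-f^*(-D)$ (legitimate, since by subadditivity the liminf is an infimum and may be computed along the subsequence of multiples of $n$), are correct for $f^\natural(mD)/m$: there subadditivity gives $f^\natural(kmD)\leq k\,f^\natural(mD)$, which bounds the general term \emph{below} by a divisor already known to be big, and big plus effective is big by Proposition \ref{prop:big<=>high rank}.

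The gap is in the corresponding step for $-f^\natural(-mD)/m$ at arbitrary $m$. Setting $g(W):=-f^\natural(-W)$, superadditivity gives $g(kmD)\geq k\,g(mD)$, i.e.\ it bounds $g(mD)/m$ from \emph{above} by the terms $g(kmD)/(km)$ whose bigness you have established; an upper bound by a big divisor proves nothing. Likewise your inequality $-f^\natural(-mD)/m\geq -f^\natural(-D)$ compares the general term with the case $m=1$, which is itself one of the cases still to be proved — it is covered by your base case only when $E$ is already integral, so invoking it here is circular. As written, the second assertion is therefore established only for $m$ divisible by the integer $n$ clearing the denominators of the decomposition; to get all $m$ you need the splitting identity to apply at level $m$ itself (an integral relation $mD\sim A+E$ with $A$ Cartier, which your reduction does not produce), and neither the sub- nor the super-additivity of $f^\natural$ will transport the conclusion from the divisible multiples down to a fixed $m$. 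It is fair to note that the paper's own two-line proof does not address this point either, as it applies $f^\natural$ and the splitting identity directly to $\Q$-divisors; but since your proposal makes the integrality reduction explicit, it inherits the obligation to close this case, and the mechanism you propose does not do so.
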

\begin{proof} For simplicity we will only show that $f^\natural(D)$ is big over $U$. The proof proceeds in the same way for all pullbacks. Let $p:X\rightarrow U$ and let $q:Y\rightarrow U$. Since $D$ is $p$-big, we can find a $p$-ample $\Q$-Cartier divisor $A$ and an effective Weil $\Q$-divisor $E$ such that $D\sim_\Q A+E$. Then, $f^\natural(D)=f^\natural(A+E)=f^*(A)+f^\natural(E)$. Since $E$ is effective, so if $f^\natural(E)$. Since $A$ is a $p$-ample $\Q$-Cartier and $f$ is birational, $f^*(A)$ is $q$-big; thus there exist a $q$-ample $\Q$-Cartier divisor $B$ and an effective divisor $E'$ such that $f^*(A)\sim_\Q B+E'$. Hence $f^\natural(D)\sim_\Q B+(E'+f^\natural(E))$, with $B$ $q$-ample and $E'+f^\natural(E)\geq0$.
\end{proof}
\begin{thm}\label{thm:big<=>qbig} Let $X\rightarrow U$ be a projective morphism of normal projective varieties over an algebraically closed field $k$. If $g:Y\rightarrow X$ is a small projective birational map over $U$ such that $\bar{D}:=g_*^{-1}D$ is $\Q$-Cartier and $g$-ample, then $D$ is big over $U$ if and only if $\bar{D}$ is big over $U$.
\end{thm}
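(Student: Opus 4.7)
The strategy is to use the two previously established results: lemma \ref{lm:pullback of big} (pullback of big divisors is big) for one direction, and proposition \ref{prop:big<=>high rank} (the rank-growth characterization of bigness, applicable here because $\Rscr(X,D)$ is finitely generated) for the other. The Q-Cartierization lemma \ref{lm:themostusefullemmaintheworld} is the hinge: since $\bar D$ is $\Q$-Cartier and $g$-ample, it supplies both the finite generation of $\Rscr(X,D)$ and the identity $g_*\regsh_Y(m\bar D)=\regsh_X(mD)$ for all $1||m$.

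For the forward direction, I would first observe that because $g$ is small birational, $\regsh_Y\cdot\regsh_X(-D)$ agrees with $\regsh_Y(-\bar D)$ away from a subset of codimension two in $Y$, so taking reflexive hulls gives $g^{\natural}(D)=g_*^{-1}D=\bar D$. Lemma \ref{lm:pullback of big} applied to $g$ then immediately yields that $\bar D$ is big over $U$. (Since $\bar D$ is $\Q$-Cartier, this coincides with the ordinary notion of bigness by lemma \ref{lm:immediate properties}(a).)

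For the reverse direction, let $n$ denote the dimension of the general fiber of $f$; this equals the dimension of the general fiber of $f\circ g$ since $g$ is birational. Assuming $\bar D$ is big over $U$, the implication (b)$\Rightarrow$(c) of proposition \ref{prop:big<=>high rank}---which holds without any finite generation hypothesis---produces a constant $c>0$ with
$$\rank (f\circ g)_*\regsh_Y(m\bar D)>cm^n \quad\text{for } 1||m.$$
Using $(f\circ g)_*=f_*\circ g_*$ together with the Q-Cartierization identity $g_*\regsh_Y(m\bar D)=\regsh_X(mD)$ valid for $1||m$, this inequality transfers to $\rank f_*\regsh_X(mD)>cm^n$. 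Because $\Rscr(X,D)$ is finitely generated, the full equivalence (c)$\Rightarrow$(b) of proposition \ref{prop:big<=>high rank} then yields that $D$ is $f$-big.

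The main technical point is the identification $\bar D=g^{\natural}D$ needed to invoke lemma \ref{lm:pullback of big}, together with the identity $g_*\regsh_Y(m\bar D)=\regsh_X(mD)$ for $1||m$ that propagates rank growth from $Y$ to $X$; both are straightforward consequences of $g$ being small, but they cannot be skipped since without them one cannot reduce the problem to the already-available results for $\Q$-Cartier divisors.
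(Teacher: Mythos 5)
Your proposal is correct and follows essentially the same route as the paper: the forward direction via the identification $\bar D = g^{\natural}D$ and Lemma \ref{lm:pullback of big}, and the reverse direction by transferring the rank growth of $(f\circ g)_*\regsh_Y(m\bar D)$ to $f_*\regsh_X(mD)$ through the identity $g_*\regsh_Y(m\bar D)=\regsh_X(mD)$ and then invoking Proposition \ref{prop:big<=>high rank}. Your explicit remark that the finite generation of $\Rscr(X,D)$ (needed for the implication (c)$\Rightarrow$(b)) is supplied by Lemma \ref{lm:themostusefullemmaintheworld} is a point the paper leaves implicit.
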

\begin{proof} Since $\bar{D}=g^\natural(D)$ ($g$ is small), if $D$ is big over $U$, so is $\bar{D}$ by \ref{lm:pullback of big}.

Let $\bar{D}$ be big over $U$. Let $f:X\rightarrow U$ and let $h=f\circ g:Y\rightarrow U$. For each $m\geq 0$, $g_*\regsh_Y(m\bar{D})=\regsh_X(mD)$. Pushing forward the sheaves, $h_*\regsh_Y(m\bar{D})=f_*g_*\regsh_Y(m\bar{D})= f_*\regsh_X(mD)$. Since $\bar{D}$ is $h$-big over $U$, there exists $c>0$ such that
$$
cm^n<\rank h_*\regsh_Y(m\bar{D})=\rank f_*\regsh_X(mD)
$$
for all $m>0$ sufficiently divisible. By \ref{prop:big<=>high rank}, $D$ is $f$-big.
\end{proof}
%
%
%
%
%          PSEFF WEIL DIVISORS
%
%
%
%
\subsection{Pseudo-effective Weil divisors}
\begin{lm} Let $f:X\rightarrow U$ be a projective morphism of quasi-projective normal varieties over an algebraically closed field $k$ ($U$ irreducible), and let $D$ be a Weil $\Q$-divisor on $X$. Then $D$ is $f$-pseff if and only if, for every $f$-ample $\Q$-Cartier divisor $A$, $A+D\sim_{f,\Q}E$, for some $E$ effective.
\end{lm}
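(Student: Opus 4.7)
The plan is to reduce both directions to an auxiliary observation: \emph{every $f$-ample $\Q$-Cartier divisor $C$ on $X$ satisfies $C\sim_{f,\Q}E$ for some effective $E$.} Once this is granted, the two implications follow from the trivial splitting $A=\tfrac{A}{2}+\tfrac{A}{2}$ combined with the definitions of $f$-big and $f$-pseff.

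To establish the auxiliary observation I would pick $m>0$ with $mC$ Cartier and, using the quasi-projectivity of $U$, an ample Cartier divisor $H$ on $U$. Since $mC$ is $f$-ample, $f_*\regsh_X(mC)$ is a nonzero coherent sheaf on $U$, and twisting by a sufficiently large power of $H$ makes it globally generated; equivalently,
\[
H^0\!\bigl(X,\regsh_X(mC+nf^*H)\bigr)\neq 0\qquad(n\gg 0).
\]
Any such section yields an effective Cartier divisor $E_0\sim mC+nf^*H$. Since $nf^*H$ is a pullback from $U$, scaling by $1/m$ gives $C\sim_{f,\Q}E_0/m\geq 0$.

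For the ``if'' direction, suppose that $A+D\sim_{f,\Q}E$ with $E$ effective for every $f$-ample $\Q$-Cartier $A$. Given such an $A$, apply the hypothesis to $A/2$ (itself $f$-ample $\Q$-Cartier) to obtain $D+\tfrac{A}{2}\sim_{f,\Q}E'$ with $E'\geq 0$. Then
\[
D+A\sim_{f,\Q}\tfrac{A}{2}+E'
\]
exhibits $D+A$ as ($f$-ample $\Q$-Cartier)$+$(effective), so $D+A$ is $f$-big, i.e. $D$ is $f$-pseff.

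For the ``only if'' direction, assume $D$ is $f$-pseff and fix an $f$-ample $\Q$-Cartier $A$. Applying $f$-pseffness to $A/2$, there exist an $f$-ample $\Q$-Cartier $A'$ and an effective $E'$ with $D+\tfrac{A}{2}\sim_{f,\Q}A'+E'$, and hence
\[
D+A\sim_{f,\Q}\bigl(\tfrac{A}{2}+A'\bigr)+E'.
\]
Since $\tfrac{A}{2}+A'$ is $f$-ample $\Q$-Cartier, the auxiliary observation provides an effective $E''$ with $\tfrac{A}{2}+A'\sim_{f,\Q}E''$, and one concludes $D+A\sim_{f,\Q}E''+E'\geq 0$. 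The main obstacle, and the only place quasi-projectivity of $U$ is used, is precisely the auxiliary observation: the $f$-ample summand produced by the definition of $f$-bigness is not effective on the nose, and one must twist by the pullback of an ample divisor on $U$ to obtain a genuine global section.
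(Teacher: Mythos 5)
Your proof is correct and takes essentially the same route as the paper: the ``if'' direction is the identical $A=\tfrac{A}{2}+\tfrac{A}{2}$ splitting, and your auxiliary observation (twisting by the pullback of an ample divisor on $U$ to produce a section) simply makes explicit the step the paper dismisses with ``clearly'' in the ``only if'' direction, namely that an $f$-big divisor is $\sim_{f,\Q}$ an effective one.
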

\begin{proof} If $D$ is $f$-pseff, for each $f$-ample $\Q$-Cartier $A$, $D+A$ is $f$-big, and thus clearly $\Q$-linearly equivalent over $U$ to an effective divisor.

Conversely, let $D$ verify the above condition. Let $A$ be any $f$-ample $\Q$-Cartier divisor. Then $D+A/2\sim_{f,\Q}E$, for some $E$ effective. Thus, since $A$ is $\Q$-Cartier, $A+D=A/2+(A/2+D)\sim_{f,\Q}A/2+E$, which is $f$-big.
\end{proof}
\begin{thm} Let $f:X\rightarrow U$ be a projective morphism of normal projective varieties over an algebraically closed field $k$. If $g:Y\rightarrow X$ is a small projective birational map over $U$ such that $\bar{D}:=g^{-1}_*D$ is $\Q$-Cartier and $g$-ample, then $D$ is pseudo-effective over $U$ if and only if $\bar{D}$ is pseudo-effective over $U$.
\end{thm}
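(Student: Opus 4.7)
My plan is to reduce the statement to the already-proved equivalence for big Weil divisors, Theorem~\ref{thm:big<=>qbig}, via the characterization of pseudo-effectivity in the preceding lemma: $D$ is $f$-pseff if and only if $D+A$ is $f$-big for every $f$-ample $\Q$-Cartier $A$ on $X$, and likewise for $\bar{D}$ with respect to $h:=f\circ g$. The key underlying observation is that whenever $A$ is $\Q$-Cartier on $X$, we have $g^{-1}_*(D+A)=\bar{D}+g^*A$, and this is $\Q$-Cartier and $g$-ample (since $\bar{D}$ is $g$-ample and $g^*A$ is $g$-numerically trivial), so Theorem~\ref{thm:big<=>qbig} does apply and yields $D+A$ is $f$-big if and only if $\bar{D}+g^*A$ is $h$-big.

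For the forward implication, I would fix an arbitrary $h$-ample $\Q$-Cartier $\bar{A}$ on $Y$ together with any $f$-ample $\Q$-Cartier $A$ on $X$. By openness of amplitude for $\Q$-Cartier divisors, $\bar{A}-\e g^*A$ is $h$-ample $\Q$-Cartier for $0<\e\ll1$, and I decompose
\[
\bar{D}+\bar{A}=(\bar{D}+\e g^*A)+(\bar{A}-\e g^*A).
\]
The first summand is $h$-big by Theorem~\ref{thm:big<=>qbig} applied to $D+\e A$ (which is $f$-big by $f$-pseff), and the second is $h$-ample $\Q$-Cartier. Writing the first summand as $\Q$-Cartier $h$-ample plus effective and absorbing the second into the ample part shows $\bar{D}+\bar{A}$ is $h$-big, so $\bar{D}$ is $h$-pseff.

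For the converse, assuming $\bar{D}$ is $h$-pseff and letting $A$ be any $f$-ample $\Q$-Cartier on $X$, by the equivalence above it suffices to show $\bar{D}+g^*A$ is $h$-big. Since $\bar{D}$ is $g$-ample and $g^*A$ is pulled back from the $f$-ample $A$, for $N\gg0$ the divisor $\bar{H}:=\bar{D}+Ng^*A$ is $h$-ample $\Q$-Cartier. Applying $h$-pseff of $\bar{D}$ to the $h$-ample $\Q$-Cartier divisor $\tfrac{1}{N}\bar{H}$ gives that $\bar{D}+\tfrac{1}{N}\bar{H}=\bigl(1+\tfrac{1}{N}\bigr)\bar{D}+g^*A$ is $h$-big; scaling by $\tfrac{N}{N+1}>0$ (bigness is preserved by positive rational scaling) yields that $\bar{D}+\tfrac{N}{N+1}g^*A$ is $h$-big, and then
\[
\bar{D}+g^*A=\bigl(\bar{D}+\tfrac{N}{N+1}g^*A\bigr)+\tfrac{1}{N+1}g^*A
\]
is a Weil big divisor plus a $\Q$-Cartier nef divisor, hence Weil big.

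I expect the main obstacle to be the bookkeeping required to ensure all intermediate sums stay within the (Weil) big class; the argument rests on the elementary fact that a Weil big divisor plus a $\Q$-Cartier nef is Weil big, obtained by unpacking big as $\Q$-Cartier ample plus effective and absorbing the nef summand into the ample piece. The substantive content is funneled through Theorem~\ref{thm:big<=>qbig}, which is what transports the question from $X$ to the model $Y$ where $\bar{D}$ is $\Q$-Cartier.
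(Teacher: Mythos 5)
Your proof is correct and follows essentially the same route as the paper's: both directions are funneled through Theorem \ref{thm:big<=>qbig}, using that $g^*A$ is $g$-trivial so $\bar{D}+g^*A=g^{-1}_*(D+A)$ stays $g$-ample, and your converse uses the same device of making $\bar{D}+Ng^*A$ $h$-ample for $N\gg0$ and then invoking pseudo-effectivity of $\bar{D}$ against a scaled copy of it. The only divergence is in closing the forward direction: the paper deduces pseudo-effectivity of the $\Q$-Cartier divisor $\bar{D}$ from bigness of $\bar{D}+g^*A$ for all $A$ via the classical fact that the pseudo-effective cone is the closure of the big cone (\cite[2.2.26]{lazarsfeld}), whereas you verify the paper's definition directly by perturbing an arbitrary $h$-ample $\bar{A}$ to $\bar{A}-\e g^*A$ and absorbing --- a slightly more self-contained but equivalent finish.
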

\begin{proof} Let $D$ be $f$-pseff. Let $A$ be any $\Q$-Cartier $f$-ample divisor. The map $g$ is small, so $\bar{D}+g^*A=g^{-1}_*(D+A)$; thus the divisor $g^{-1}_*(D+A)$ is still $g$-ample since $g^*A$ is $g$-trivial and $\bar{D}$ is $g$-ample. Moreover, by assumption $D+A$ is $f$-big. Hence, by \ref{thm:big<=>qbig}, $g^{-1}_*(D+A)=\bar{D}+g^*A$ is big over $U$. Since in $\textrm{NS}(X)_\Q$ (i.e., for $\Q$-Cartier divisors) the effective cone is the closure of the big cone, \cite[2.2.26]{lazarsfeld}, $\bar{D}$ is pseudo-effective over $U$.

Conversely, let $\bar{D}$ be pseudo-effective over $U$, and let $A$ be an $f$-ample Cartier divisor on $X$. Since $\bar{D}$ is ample over $X$ and $A$ is ample over $U$, there exists $k\geq2$ such that $\bar{D}+kg^*A$ is ample over $U$. Since $\bar{D}$ is pseudo-effective over $U$ and $\bar{D}+kg^*A$ is ample over $U$, $k(\bar{D}+g^*A)=(k-1)\bar{D}+\bar{D}+kg^*A$ is big over $U$. Thus $\bar{D}+g^*A$ is big over $U$. Since $\bar{D}+g^*A=g^{-1}_*(D+A)$ is $g$-ample, by \ref{thm:big<=>qbig} $D+A$ is big over $U$. Since this is true for every $A$ ample over $U$, $D$ is $f$-pseff.
\end{proof}
%
%
%
%
%          Vanishing theorems
%
%
%
%
\section{Vanishing theorems}
\label{van}
From now on, all our varieties we will be defined over an algebraically closed field $k$.

When talking about positivity of divisors on projective varieties, it is natural to talk about vanishing theorems. Two of the main theorems are Fujita (or Serre) in arbitrary characteristic and Kawamata-Viehweg (or Kodaira) in characteristic zero. It is unlikely that for Weil divisors we have vanishing theorems as strong as for Cartier divisors.
\subsection{Serre and Fujita vanishing}
\begin{thm}[Relative Fujita vanishing for locally free sheaves] Let $f:X\rightarrow U$ be a projective morphism of quasi projective normal varieties, let $H$ be an $f$-ample Weil divisor on $X$ and let $\Fscr$ be a locally free coherent sheaf on $X$. There exists an integer $m(\Fscr,H)$ such that
$$
R^if_*\big(\Fscr\otimes\regsh_X(mH+D)\big)=0,\quad\textrm{for all $i>0$,}
$$
for all positive $m$ such that $m(\Fscr,H)|m$ and for any relatively nef Cartier divisor $D$ on $X$.
\end{thm}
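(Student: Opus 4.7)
The plan is to reduce to classical relative Fujita vanishing on the $\Q$-Cartierization of $H$ and push down via the projection formula. Because $H$ is $f$-ample, by definition $\Rscr(X/U,H)$ is finitely generated, so Lemma \ref{lm:themostusefullemmaintheworld} (applied locally on $U$ and glued, or directly in the relative setting) yields a small projective birational morphism $g:Y\rightarrow X$ with $\bar H:=g^{-1}_*H$ a $\Q$-Cartier $g$-ample divisor and $g_*\regsh_Y(m\bar H)=\regsh_X(mH)$ for all $m\geq0$. By Theorem \ref{thm:ample<=>qample}, $\bar H$ is also ample over $U$, i.e.\ $(f\circ g)$-ample.

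Next I would pull back the data to $Y$. Since $\Fscr$ is locally free, $g^*\Fscr$ is a locally free coherent sheaf on $Y$; since $D$ is Cartier and $f$-nef, $g^*D$ is a Cartier $(f\circ g)$-nef divisor on $Y$. Fix an integer $k\geq1$ with $k\bar H$ Cartier. Classical relative Fujita vanishing applied to the locally free sheaf $g^*\Fscr$ and the ample Cartier divisor $k\bar H$ on $Y\rightarrow U$ then furnishes an integer $N_1$ such that
$$
R^i(f\circ g)_*\bigl(g^*\Fscr\otimes\regsh_Y(m\bar H+g^*D)\bigr)=0\quad(i>0)
$$
for every positive $m$ divisible by $k$ with $m\geq kN_1$ and every $(f\circ g)$-nef Cartier divisor (here $g^*D$). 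Independently, relative Serre vanishing for the $g$-ample Cartier divisor $k\bar H$ produces an integer $N_2$ such that $R^j g_*\regsh_Y(m\bar H)=0$ for $j>0$ whenever $k\mid m$ and $m\geq kN_2$.

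Now I would combine the two using the projection formula. Writing $\regsh_Y(m\bar H+g^*D)=\regsh_Y(m\bar H)\otimes g^*\regsh_X(D)$ (valid since $k\mid m$ makes $m\bar H$ Cartier and $D$ is Cartier), the projection formula gives
$$
R^j g_*\bigl(g^*\Fscr\otimes\regsh_Y(m\bar H+g^*D)\bigr)=\Fscr\otimes\regsh_X(D)\otimes R^j g_*\regsh_Y(m\bar H),
$$
which vanishes for $j>0$ by the choice of $N_2$, and for $j=0$ equals $\Fscr\otimes\regsh_X(mH+D)$ by Lemma \ref{lm:themostusefullemmaintheworld} together with the fact that $D$ is Cartier. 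Setting $m(\Fscr,H):=k\cdot\max(N_1,N_2)$ and taking any positive $m$ divisible by $m(\Fscr,H)$, degeneracy of the Leray spectral sequence $E_2^{p,q}=R^pf_*R^qg_*\Rightarrow R^{p+q}(f\circ g)_*$ then identifies
$$
R^if_*\bigl(\Fscr\otimes\regsh_X(mH+D)\bigr)\cong R^i(f\circ g)_*\bigl(g^*\Fscr\otimes\regsh_Y(m\bar H+g^*D)\bigr),
$$
and the right-hand side is zero.

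The main obstacle I anticipate is the bookkeeping of ``sufficiently divisible'' versus ``sufficiently large'' and ensuring that the single threshold $m(\Fscr,H)$ simultaneously handles both the Fujita vanishing on $Y$ and the vanishing of the higher direct images under $g$, uniformly in the nef Cartier divisor $D$. The key point enabling this uniformity is that classical relative Fujita vanishing provides a threshold independent of the nef Cartier twist, while the auxiliary vanishing $R^jg_*\regsh_Y(m\bar H)=0$ does not involve $D$ at all.
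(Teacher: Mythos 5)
Your proposal is correct and follows essentially the same route as the paper's proof: pass to the $\Q$-Cartierization $g:Y\rightarrow X$ of $H$, apply classical relative Fujita vanishing on $Y$ over $U$ and relative Serre vanishing for $g$, then descend via the projection formula and the degenerate Leray/Grothendieck spectral sequence. The only cosmetic differences are that you apply Serre vanishing to $\regsh_Y(m\bar H)$ alone and tensor afterwards (the paper applies it directly to $g^*\Fscr(D)\otimes\regsh_Y(m\bar H)$), and you track an explicit Cartier multiple $k\bar H$, which is a slightly more careful bookkeeping of the same argument.
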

\begin{proof} Let $g:Y\rightarrow X$ be the $\Q$-Cartierization of $H$ (which exists since $H$ is $f$-ample). Let $\bar{H}:=g^{-1}_*H$ and let $h=f\circ g:Y\rightarrow U$; by theorem \ref{thm:ample<=>qample}, $\bar{H}$ is $h$-ample, and $g$-ample. By lemma \ref{lm:themostusefullemmaintheworld}, for all $m\geq 0$,
$$
g_*\regsh_Y(m\bar{H})=\regsh_X(mH).
$$

Let $D$ be any $f$-nef Cartier divisor; let us denote $\Fscr(D)=\Fscr\otimes\regsh_X(D)$. By the projection formula,
\begin{eqnarray}\label{eq:projection}
g_*\big(g^*\Fscr(D)\otimes\regsh_Y(m\bar{H})\big)&\cong&\Fscr\otimes\regsh_X(D)\otimes g_*\regsh_Y(m\bar{H})\cong\\
&\cong&\Fscr\otimes\regsh_X(D+mH)\nonumber
\end{eqnarray}
for all $m\geq0$. The sheaf $g^*\regsh_X(D)$ is $g$-trivial while $g^*\Fscr$ is still coherent and $\regsh_Y(\bar{H})$ is $g$-ample; thus there exists $m_1:=m_1(H)$ such that, for all $m_1|m$,
\begin{eqnarray}\label{eq:R^i=0}
R^jg_*\big(g^*\Fscr(D)\otimes\regsh_Y(m\bar{H})\big)=0,\quad\textrm{for all $j>0$}
\end{eqnarray}
(this is the relative version of Serre's vanishing, \cite[1.5]{keeler}). We have Grothendieck's spectral sequence
$$
R^if_*\Big(R^jg_*\big(g^*\Fscr(D)\otimes\regsh_Y(m\bar{H})\big)\Big)\Rightarrow R^{i+j}h_*\big(g^*\Fscr(D)\otimes\regsh_Y(m\bar{H})\big).
$$
By \eqref{eq:R^i=0}, the Grothendieck's spectral sequence converges at the $E_2$ page, that is, $E_2=E_\infty$, and we have the equalities
\begin{eqnarray*}
R^if_*\Big(g_*\big(g^*\Fscr(D)\otimes\regsh_Y(m\bar{H})\big)\Big)=R^ih_*\big(g^*\Fscr(D)\otimes\regsh_Y(m\bar{H})\big)
\end{eqnarray*}
for every $m_1|m$ and every $i\geq0$. By \eqref{eq:projection}, the cohomology on the left-hand of the equality is
$$
R^if_*\Big(g_*\big(g^*\Fscr(D)\otimes\regsh_Y(m\bar{H})\big)\Big)=R^if_*(\Fscr\otimes\regsh_X(D+mH)),
$$
for all $m_0|m$. On the other hand, since $g^*D$ is $h$-nef on $Y$ (it is the pullback of an $f$-nef Cartier divisor), $g^*\Fscr$ is coherent and $\regsh_Y(\bar{H})$ is $h$-ample, by the relative Fujita's vanishing theorem, \cite[1.5]{keeler}, there exists $m_2(H,g^*\Fscr)=m_2(H,\Fscr)$ such that, for all $m_2:=m_2(H,\Fscr)|m$ and for all $i>0$,
$$
R^ih_*\big(g^*\Fscr(D)\otimes\regsh_Y(m\bar{H})\big)=R^ih_*\big(g^*\Fscr\otimes\regsh_Y(m\bar{H}+g^*D)\big)=0.
$$
Thus, if $m(H,\Fscr)=lcm(m_1,m_2)$, for all $i>0$ and positive $m(H\Fscr)|m$,
$$
R^if_*(\Fscr\otimes\regsh_X(mH+D))=0.
$$
\end{proof}
\begin{cor}[Fujita vanishing for locally free sheaves] Let $X$ be a projective variety, let $H$ be an ample Weil divisor and let $\Fscr$ be any locally free coherent sheaf on $X$. There exists an integer $m(\Fscr,H)$ such that
$$
H^i(X,\Fscr\otimes\regsh_X(mH+D))=0,\quad\textrm{for all $i>0$,}
$$
for all positive $m$ such that $m(\Fscr,H)|m$ and for any nef Cartier divisor $D$ on $X$.
\end{cor}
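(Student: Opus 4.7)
The plan is to obtain this corollary as the absolute case $U=\Spec k$ of the preceding relative Fujita vanishing theorem. Since $X$ is a (normal) projective variety and $\Spec k$ is a quasi-projective normal variety, the structure morphism $f:X\rightarrow\Spec k$ satisfies the hypotheses of the theorem. By the convention set out in the definitions of section \ref{ndef}, $H$ ample simply means $H$ is $f$-ample with respect to this $f$, and by lemma \ref{lm:immediate properties}(a), a nef Cartier divisor $D$ on $X$ is the same data as an $f$-nef Cartier divisor.

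The only translation needed is the standard identification of higher direct images to a point with sheaf cohomology: for any coherent sheaf $\Gscr$ on $X$, the sheaf $R^if_*\Gscr$ on $\Spec k$ is the quasi-coherent sheaf associated to the $k$-vector space $H^i(X,\Gscr)$, so $R^if_*\Gscr=0$ if and only if $H^i(X,\Gscr)=0$. Taking $\Gscr=\Fscr\otimes\regsh_X(mH+D)$ and invoking the theorem yields the integer $m(\Fscr,H)$ and the vanishing claimed.

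Since the corollary is essentially a specialization of the theorem already proved, there is no real obstacle beyond this routine identification; no additional argument is required.
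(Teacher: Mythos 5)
Your proposal is correct and matches the paper's (implicit) argument: the corollary is exactly the absolute case $U=\Spec k$ of the relative Fujita vanishing theorem, with $R^if_*$ identified with $H^i(X,-)$. The paper offers no further proof, so nothing is missing.
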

\begin{rk} It is not clear if the hypotheses can be weakened to allow any coherent sheaf for $\Fscr$ or nef Weil divisors.
\end{rk}
\subsection{Kawamata-Viehweg vanishing}
With a similar technique to the one used in the proof of the relative Fujita vanishing, we can prove the relative Kawamata-Viehweg.
\begin{thm}[Relative Kawamata-Viehweg vanishing] Let $f:X\rightarrow U$ be a projective morphism of complex quasi-projective varieties. Let $(X,\Delta)$ be a klt pair and let $D$ be a Weil divisor on $X$ such that $D-(K_X+\Delta)$ is $f$-big and $f$-nef. Then
$$
R^if_*\regsh_X(D)=0,\quad\textrm{for $i>0$}.
$$
\end{thm}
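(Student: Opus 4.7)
The plan is to mimic the proof of the relative Fujita vanishing above: take a $\Q$-Cartierization of $D$, reduce to the classical (Cartier) relative Kawamata--Viehweg theorem on a small modification, and combine the partial vanishings via the Grothendieck spectral sequence. Since $(X,\Delta)$ is klt over $\mathbb{C}$, the algebra $\Rscr(X,D)$ is finitely generated (\cite[92]{kol}), so by lemma \ref{lm:themostusefullemmaintheworld} there exists a small projective birational morphism $g:Y\to X$ such that $\bar{D}:=g_*^{-1}D$ is $\Q$-Cartier and $g$-ample, with $g_*\regsh_Y(\bar{D})=\regsh_X(D)$. Set $h:=f\circ g:Y\to U$.

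The first step is to collect positivity on $Y$. Because $g$ is small and $K_X+\Delta$ is $\Q$-Cartier, one has $K_Y+g_*^{-1}\Delta=g^*(K_X+\Delta)$, so $(Y,g_*^{-1}\Delta)$ is klt with the same discrepancies as $(X,\Delta)$. The $\Q$-Cartier divisor
$$
\bar{D}-(K_Y+g_*^{-1}\Delta)=\bar{D}-g^*(K_X+\Delta)
$$
is $g$-ample, being the difference of a $g$-ample and a $g$-trivial divisor, and it is the $\Q$-Cartierization along $g$ of the $f$-big, $f$-nef Weil divisor $D-(K_X+\Delta)$; so by theorems \ref{thm:nef<=>qnef} and \ref{thm:big<=>qbig} it is also $h$-nef and $h$-big.

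Next, apply the classical Cartier Kawamata--Viehweg vanishing twice on the klt pair $(Y,g_*^{-1}\Delta)$: once for the morphism $h$, where $\bar{D}-(K_Y+g_*^{-1}\Delta)$ is $h$-big and $h$-nef, yielding $R^ih_*\regsh_Y(\bar{D})=0$ for $i>0$; and once for the small morphism $g$, where the same divisor is $g$-ample (hence $g$-big and $g$-nef), yielding $R^jg_*\regsh_Y(\bar{D})=0$ for $j>0$. The Grothendieck spectral sequence
$$
R^if_*\bigl(R^jg_*\regsh_Y(\bar{D})\bigr)\Rightarrow R^{i+j}h_*\regsh_Y(\bar{D})
$$
then degenerates at $E_2$ in the single row $j=0$, and combined with $g_*\regsh_Y(\bar{D})=\regsh_X(D)$ this gives $R^if_*\regsh_X(D)=R^ih_*\regsh_Y(\bar{D})=0$ for every $i>0$.

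The only real obstacle is in the setup: one must verify that $(Y,g_*^{-1}\Delta)$ inherits the klt condition (immediate from smallness of $g$ via the identity $K_Y+g_*^{-1}\Delta=g^*(K_X+\Delta)$) and that nefness and bigness of $D-(K_X+\Delta)$ propagate to $\bar{D}-g^*(K_X+\Delta)$ across $g$, which is precisely the content of theorems \ref{thm:nef<=>qnef} and \ref{thm:big<=>qbig}. Once these identifications are made, the argument is a formal application of two classical vanishing theorems on $Y$ and the Leray spectral sequence, exactly mirroring the structure of the Fujita vanishing argument given just above.
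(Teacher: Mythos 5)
Your proof is correct and follows essentially the same route as the paper's: pass to the $\Q$-Cartierization $g:Y=\Proj_X\Rscr(X,D)\to X$, observe that $(Y,g_*^{-1}\Delta)$ is klt and that $\bar{D}-g^*(K_X+\Delta)$ is $g$-ample as well as $h$-nef and $h$-big via theorems \ref{thm:nef<=>qnef} and \ref{thm:big<=>qbig}, apply the classical relative Kawamata--Viehweg vanishing to both $g$ and $h$, and conclude with the Grothendieck spectral sequence together with $g_*\regsh_Y(\bar{D})=\regsh_X(D)$.
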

\begin{proof} Since $(X,\Delta)$ is klt, both $\Rscr(X,D)$ and $\Rscr(X,D-K_X-\Delta)$ are finitely generated, \cite[92]{kol}. Let $g:Y=\Proj_X\Rscr(X,D)\rightarrow X$, let $\bar{D}=g^{-1}_*D$ and let $h=f\circ g:Y\rightarrow U$. Let $\Delta_Y=g^{-1}_*\Delta$. Since $g$ is small $K_Y+\Delta_Y=g^*(K_X+\Delta)$ and $g_*\Delta_Y=\Delta$. By \cite[2.30]{komo}, $(Y,\Delta_Y)$ is a klt pair. Moreover, since $\bar{D}$ is $g$-ample and $K_Y+\Delta_Y=g^*(K_X+\Delta)$ is $g$-trivial, $\bar{D}-(K_Y+\Delta_Y)=\bar{D}-g^*(K_X+\Delta)$ is still $g$-ample and $\bar{D}-(K_Y+\Delta_Y)=g^{-1}_*\big(D-(K_X+\Delta)\big)$. By \ref{thm:nef<=>qnef} and \ref{thm:big<=>qbig}, $\bar{D}-(K_Y+\Delta_Y)$ is $\Q$-Cartier, $h$-nef and $h$-big. By the relative Kawamata-Viehweg vanishing,
$$
R^ih_*\regsh_Y(\bar{D})=0,\quad\textrm{for $i>0$}.
$$
On the other hand, by \ref{lm:themostusefullemmaintheworld}, $g_*\regsh_Y(\bar{D})=\regsh_X(D)$, and by the relative Kawamata-Viehweg vanishing
$$
R^ig_*\regsh_Y(\bar{D})=0,\quad\textrm{for $i>0$},
$$
since $\bar{D}-(K_Y+\Delta_Y)$ is $g$-ample. Thus the Grothendieck spectral sequence
$$
R^if_*R^jg_*\regsh_Y(\bar{D})\Rightarrow R^{i+j}h_*\regsh_Y(\bar{D})
$$
converges immediately, and we have
$$
R^if_*\regsh_X(D)=R^if_*\big(g_*\regsh_Y(\bar{D})\big)\cong R^ih_*\regsh_Y(\bar{D})=0,\quad\textrm{for $i>0$}.
$$
\end{proof}
\begin{cor}[Kawamata-Viehweg vanishing] Let $X$ be a complex projective variety; let $(X,\Delta)$ be a klt pair. Let $D$ be a Weil divisor such that $D-(K_X+\Delta)$ is nef and big. Then
$$
H^i(X,\regsh_X(D))=0,\quad\textrm{for $i>0$}.
$$
\end{cor}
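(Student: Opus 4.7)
The plan is to simply invoke the relative Kawamata-Viehweg vanishing theorem just established, applied to the structure morphism $f: X \to \Spec k$. Since $X$ is projective over $k$, this $f$ is a projective morphism of quasi-projective varieties, so the hypotheses of the relative statement are met as soon as we verify that absolute nefness and bigness of $D - (K_X + \Delta)$ coincide with $f$-nefness and $f$-bigness in this case.

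First I would note that by Lemma \ref{lm:immediate properties}(a), for $\Q$-Cartier divisors absolute and relative notions agree over $\Spec k$ (but here $D - (K_X + \Delta)$ is a priori only Weil). For Weil divisors, nefness and bigness over $\Spec k$ are precisely the definitions given in Section \ref{ndef}, which are stated for a projective morphism to a quasi-projective base; taking $U = \Spec k$ recovers the absolute notions, as the paper explicitly records after each definition. Therefore $D - (K_X + \Delta)$ nef and big is the same as $D - (K_X + \Delta)$ being $f$-nef and $f$-big.

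Applying the relative Kawamata-Viehweg vanishing theorem then gives $R^i f_* \regsh_X(D) = 0$ for $i > 0$. Since $f$ is the structure morphism to $\Spec k$, pushforward along $f$ is just taking global sections of the quasi-coherent sheaf on $\Spec k$ corresponding to $H^i(X, \regsh_X(D))$; under the equivalence between quasi-coherent sheaves on $\Spec k$ and $k$-modules we have $R^i f_* \regsh_X(D) \cong H^i(X, \regsh_X(D))^{\sim}$, so the vanishing of the former yields the vanishing of the latter.

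There is essentially no obstacle: this is a translation corollary, whose only content is unpacking the identification of relative cohomology over $\Spec k$ with absolute cohomology. The substance was already absorbed in the proof of the relative theorem (reduction to the $\Q$-Cartierization via Theorems \ref{thm:nef<=>qnef} and \ref{thm:big<=>qbig}, klt descent via \cite[2.30]{komo}, and Grothendieck's spectral sequence).
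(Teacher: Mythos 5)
Your proposal is correct and matches the paper's (implicit) argument exactly: the corollary is the special case $U=\Spec k$ of the relative Kawamata--Viehweg theorem just proved, with the absolute notions of nef and big being by definition the relative ones over $\Spec k$, and $R^if_*$ identified with $H^i(X,-)$ for the structure morphism. The paper offers no separate proof, so there is nothing further to compare.
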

%
%
%
%
%          Non-Q-Gor MMP
%
%
%
%
\section{First steps towards a non-$\Q$-Gorenstein MMP}
In this section we work over an algebraically closed field $k$ of characteristic $0$. The first results that are needed for a non-$\Q$-Gorenstein Minimal Model Program are a non-vanishing theorem, a global generation statement, and an understanding of Fano-type varieties. This is done here, in \S5.1 and \S5.2 respectively. The next step is to prove a cone and a contraction theorem. These theorems and a proposal for a non-$\Q$-Gorenstein MMP are investigated by the first author in \cite{MMPnoflips}.
\subsection{Non-vanishing and Global generation}
\label{nvan}
We collected these two results together as they show peculiar aspects of Weil divisors, which make the original results, in some sense, impossible to fully generalize. Moreover, the technique of the proof is the same.

We will start by proving a non-$\Q$-Cartier version of Shokurov's Non-vanishing theorem, see \cite[3.4]{komo}. Next we will prove a generalization of the Base-point free theorem, see \cite[3.3]{komo}. However, for Weil divisors, base-point freeness and global generation are not equivalent. The non-expert can think of the case of an affine cone and a non-$\Q$-Cartier divisor on it. Since the variety is affine, every coherent sheaf is globally generated; on the other hand, this divisor (and all its powers) must go through the vertex of the cone. So, our generalized version of the Base-point freeness will be only a statement about global generation.
\begin{prop}[Non-vanishing] Let $f:X\rightarrow U$ be a projective morphism of quasi-projective varieties over $k$. Let $(X,\Delta)$ be a klt pair and let $D$ be an $f$-nef divisor such that $aD-(K_X+\Delta)$ is $f$-nef and $f$-big for some $a>0$. Then, for all positive $1||m$, $mD+\lceil-\Delta\rceil\sim_{\Q,f}F\geq0$.
\end{prop}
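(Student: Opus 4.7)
The natural approach is to mimic the proof of the Kawamata-Viehweg vanishing theorem that appeared earlier: pass to a $\Q$-Cartierization of $D$, apply the classical (Shokurov) non-vanishing theorem upstairs, and push the resulting effective divisor down. Since $(X,\Delta)$ is klt, every sheaf algebra $\Rscr(X,D)$ is finitely generated by \cite[92]{kol}, so one can set $g:Y=\ProjSc_X\Rscr(X,D)\to X$ and use \ref{lm:themostusefullemmaintheworld} to obtain a small projective birational morphism such that $\bar D:=g^{-1}_*D$ is $\Q$-Cartier and $g$-ample.

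Let $h:=f\circ g:Y\to U$ and $\Delta_Y:=g^{-1}_*\Delta$. Since $g$ is small one has $K_Y+\Delta_Y=g^*(K_X+\Delta)$ and $g_*\Delta_Y=\Delta$, and by \cite[2.30]{komo} the pair $(Y,\Delta_Y)$ is klt. By theorem \ref{thm:nef<=>qnef}, $\bar D$ is $h$-nef. Moreover
$$
a\bar D-(K_Y+\Delta_Y)=g^{-1}_*\bigl(aD-(K_X+\Delta)\bigr),
$$
which is $\Q$-Cartier (it is the sum of the $g$-ample $a\bar D$ and the $g$-trivial $-g^*(K_X+\Delta)$), $h$-nef by theorem \ref{thm:nef<=>qnef}, and $h$-big by theorem \ref{thm:big<=>qbig}. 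Thus the hypotheses of the classical relative Shokurov non-vanishing theorem \cite[3.4]{komo} are satisfied on $(Y,\Delta_Y)$ with the $\Q$-Cartier divisor $\bar D$.

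Applying that result, for all positive $1\|m$ there is an effective Weil $\Q$-divisor $F_Y\geq 0$ on $Y$ with
$$
m\bar D+\lceil-\Delta_Y\rceil\sim_{\Q,h}F_Y.
$$
Now push forward by $g$. Since $g$ is small, $g_*$ is a bijection on Weil divisors that preserves effectivity and commutes with round-up of coefficients, so $g_*\bar D=D$ and $g_*\lceil-\Delta_Y\rceil=\lceil-\Delta\rceil$; moreover any rational function on $Y$ realizing the $\Q$-linear equivalence over $U$ is, via $g$, a rational function on $X$ whose divisor pushes forward appropriately. Setting $F:=g_*F_Y\geq 0$ therefore yields $mD+\lceil-\Delta\rceil\sim_{\Q,f}F$, as desired.

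The only real subtlety is at the bookkeeping stage: one has to be sure that the classical non-vanishing on $Y$ (whose conclusion involves $\lceil-\Delta_Y\rceil$) pushes to the correct $\lceil-\Delta\rceil$ on $X$, and that the divisorial pull-back/push-forward along the small map $g$ preserves effectivity and $\Q$-linear equivalence over $U$. Both are immediate once smallness is invoked. The main conceptual point, as elsewhere in the paper, is that the $\Q$-Cartierization together with theorems \ref{thm:nef<=>qnef} and \ref{thm:big<=>qbig} translates every hypothesis verbatim from $X$ to $Y$, reducing the statement to its classical $\Q$-Cartier counterpart.
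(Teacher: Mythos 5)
Your proposal is correct and follows essentially the same route as the paper: pass to the $\Q$-Cartierization $Y=\Proj_X\Rscr(X,D)$, transfer all hypotheses to $(Y,\Delta_Y)$ via Theorems \ref{thm:nef<=>qnef} and \ref{thm:big<=>qbig}, apply the classical relative non-vanishing theorem, and push the effective divisor back down along the small map $g$. The only cosmetic difference is in the last step, where the paper descends the section via the sheaf inclusion $g_*\regsh_Y(m\bar D+\lceil-\Delta_Y\rceil)\subseteq\regsh_X(mD+\lceil-\Delta\rceil)$ rather than by pushing forward divisors directly; both are valid since $g$ is small.
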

\begin{proof} Without loss of generality, we can assume that $a=1$. Since $(X,\Delta)$ is klt, the algebras $\Rscr(X,D)$ and $\Rscr(X,D-K_X-\Delta)$ are both finitely generated. Let $g:Y=\Proj_X\Rscr(X,D)\rightarrow X$, let $\bar{D}=g^{-1}_*D$ and let $h=f\circ g:Y\rightarrow U$. Let $\Delta_Y=g^{-1}_*\Delta$. Since $g$ is small $K_Y+\Delta_Y=g^*(K_X+\Delta)$ and $g_*\Delta_Y=\Delta$. By \cite[2.30]{komo}, $(Y,\Delta_Y)$ is a klt pair. Moreover, since $\bar{D}$ is $g$-ample and $K_Y+\Delta_Y=g^*(K_X+\Delta)$ is $g$-trivial, $\bar{D}-(K_Y+\Delta_Y)=\bar{D}-g^*(K_X+\Delta)$ is still $g$-ample and $\bar{D}-(K_Y+\Delta_Y)=g^{-1}_*\big(D-(K_X+\Delta)\big)$. By \ref{thm:nef<=>qnef} and \ref{thm:big<=>qbig}, $\bar{D}-(K_Y+\Delta_Y)$ is $\Q$-Cartier, $h$-nef and $h$-big and $\bar{D}$ is $\Q$-Cartier $h$-nef. So we can apply the usual non-vanishing theorem and deduce that $m\bar{D}+\lceil-\Delta_Y\rceil\sim_{\Q,h}F_Y\geq0$.

Let $G=\lceil-\Delta\rceil$ and $G_Y=\lceil-\Delta_Y\rceil=g_*^{-1}G$. Since $g_*\regsh_Y(m\bar{D})=\regsh_X(mD)$ and $G$ and $G_Y$ are effective, $g_*\regsh_Y(m\bar{D}+G_Y)\subseteq\regsh_X(mD+G)$, which in turn implies that $h_*\regsh_Y(m\bar{D}+G_Y)\subseteq f_*\regsh_X(mD+G)$. Hence $mD+G=mD+\lceil-\Delta\rceil\sim_{f,\Q} F\geq0$.
\end{proof}
\begin{prop}[Global generation] Let $(X,\Delta)$ be a klt pair, $f:X\rightarrow U$ be a projective morphism of quasi-projective normal varieties. Let $D$ be an $f$-nef divisor such that $aD-(K_X+\Delta)$ is $f$-nef and $f$-big for some $a>0$. Then $D$ is $f$-agg (relatively asymptotically globally generated).
\end{prop}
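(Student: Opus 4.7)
The plan is to follow the blueprint of the preceding Non-vanishing proposition: reduce to the $\Q$-Cartier setting on a $\Q$-Cartierization of $D$, apply the classical Base-point-free theorem there, and push the conclusion back down to $X$. The hypotheses and the conclusion being asymptotic, I may WLOG take $a=1$. Since $(X,\Delta)$ is klt, the algebras $\Rscr(X,D)$ and $\Rscr(X,D-K_X-\Delta)$ are finitely generated, so Lemma \ref{lm:themostusefullemmaintheworld} produces a small projective birational map $g\colon Y=\Proj_X\Rscr(X,D)\to X$ with $\bar{D}:=g^{-1}_*D$ $\Q$-Cartier and $g$-ample. Set $\Delta_Y:=g^{-1}_*\Delta$ and $h:=f\circ g$.

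Smallness of $g$ forces $K_Y+\Delta_Y=g^*(K_X+\Delta)$ to be $g$-trivial, so $\bar{D}-(K_Y+\Delta_Y)=g^{-1}_*(D-K_X-\Delta)$ is also $\Q$-Cartier and $g$-ample, and $(Y,\Delta_Y)$ is klt by \cite[2.30]{komo}. Theorems \ref{thm:nef<=>qnef} and \ref{thm:big<=>qbig} then translate the hypotheses: $\bar{D}$ is $h$-nef, and $\bar{D}-(K_Y+\Delta_Y)$ is $h$-nef and $h$-big. The classical relative Base-point-free theorem applied to $\bar{D}$ on the klt pair $(Y,\Delta_Y)$ yields that $\regsh_Y(m\bar{D})$ is $h$-globally generated for all $1||m$.

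It remains to descend this to $X$. Since $g_*\regsh_Y(m\bar{D})=\regsh_X(mD)$ by Lemma \ref{lm:themostusefullemmaintheworld}, one has $f_*\regsh_X(mD)=h_*\regsh_Y(m\bar{D})$, and the BPF surjection on $Y$ takes the form
\[
g^*f^*f_*\regsh_X(mD)=h^*h_*\regsh_Y(m\bar{D})\twoheadrightarrow\regsh_Y(m\bar{D}).
\]
The plan is to push this forward along $g$ to deduce surjectivity of $f^*f_*\regsh_X(mD)\to\regsh_X(mD)$ on $X$. The principal obstacle is that $g_*$ does not in general preserve surjectivity of sheaf maps over the exceptional locus. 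However, since $g$ is small, $g(\exc g)$ has codimension $\geq 2$ in $X$, so the desired surjectivity is automatic on its complement; combining this with the $g$-ampleness of $\bar{D}$ (which via relative Serre vanishing gives $R^ig_*\regsh_Y(m\bar{D})=0$ for all $i>0$ and $1||m$) and with the reflexivity (hence $S_2$-ness) of $\regsh_X(mD)$, one extends the surjection to all of $X$. This pushforward step is the technical heart of the argument, and mirrors the one used in the Non-vanishing proof, now applied to a surjection of sheaves rather than to an inclusion of sections.
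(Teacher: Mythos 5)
Your overall strategy --- pass to the $\Q$-Cartierization $g\colon Y\to X$, apply the classical relative base-point-free theorem to $\bar D$ on the klt pair $(Y,\Delta_Y)$, and descend to $X$ --- is exactly the paper's, and the first two steps are carried out correctly. The problem is the justification you offer for the descent step, which you rightly identify as the technical heart. Two things go wrong there. First, knowing that the evaluation map $f^*f_*\regsh_X(mD)\to\regsh_X(mD)$ is surjective off a closed subset of codimension at least $2$, and that the target is $S_2$, does \emph{not} imply surjectivity: the inclusion $\mathscr{I}_x\hookrightarrow\regsh_X$ of the ideal sheaf of a closed point on a normal surface is an isomorphism off a codimension-$2$ set and has $S_2$ target, yet is not surjective. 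The $S_2$ device extends \emph{sections} across small subsets --- which is why it suffices in the Non-vanishing proof, where one only needs the inclusion $h_*\regsh_Y(m\bar D+G_Y)\subseteq f_*\regsh_X(mD+G)$ to produce one effective divisor --- but it gives no control on surjectivity of a map of sheaves. Second, the relative Serre vanishing you invoke, $R^ig_*\regsh_Y(m\bar D)=0$, concerns the \emph{target} of your surjection; to push a surjection forward along $g$ one needs $R^1g_*$ of its \emph{kernel} to vanish, and that kernel is a different coherent sheaf, varying with $m$, for which no such vanishing is claimed or uniformly available.

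The paper closes this step by a different mechanism: the assertion that relative global generation of $\regsh_Y(m\bar D)$ over $U$ descends to relative global generation of $\regsh_X(mD)=g_*\regsh_Y(m\bar D)$ over $U$, when $\bar D$ is $g$-ample, is precisely the descent already established in the proof of Theorem \ref{thm:nef<=>qnef}, where it is reduced to an affine base and delegated to \cite[4.3]{stefano2}. The correct move at the end of your argument is therefore to quote that result (or reproduce its proof), rather than to argue via codimension and $S_2$-ness. With that substitution your proof coincides with the paper's.
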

\begin{proof} Since $(X,\Delta)$ is klt, $\Rscr(X,D)$ is finitely generated. Let $g:Y\rightarrow X$ be the $\Q$-Cartierization of $D$, $\bar{D}=g_*^{-1}D$, and $h=f\circ g$. As in the previous proof, if $\Delta_Y=g^{-1}_*\Delta$, $(Y,\Delta_Y)$ is klt, $\bar{D}$ is $g$-ample and $h$-nef, while $a\bar{D}-(K_Y+\Delta_Y)$ is $h$-nef and $h$-big. By the base-point free theorem, $m\bar{D}$ is $h$-point free for positive $1||m$, that is, $\bar{D}$ is $h$-agg. Since $g_*\regsh_Y(m\bar{D})=\regsh_X(mD)$, $D$ is $f$-agg.
\end{proof}
%
%
%
%
%          Fano
%
%
%
%
\subsection{Fano varieties}
\label{fano} 
\begin{df} Let $X$ be a projective variety. The variety $X$ is called \emph{log Fano} if there exists an effective boundary $\Delta$ such that $K_X+\Delta$ is $\Q$-Cartier and anti-ample, and $(X,\Delta)$ has klt singularities.
\end{df}
\begin{rk} Some authors add the extra assumption of $\Q$-factoriality in the above definition.
\end{rk}

\begin{thm} Let $X$ be a projective variety having klt singularities (in the sense that there exists a boundary $\Gamma$ such that $(X,\Gamma)$ is a klt pair) and such that $-K_X$ is ample (not necessarily $\Q$-Cartier). Then $X$ is log Fano.
\end{thm}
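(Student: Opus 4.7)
My plan is to pass to the $\Q$-Cartierization $g:Y\to X$ of $-K_X$, where $-K_Y$ becomes $\Q$-Cartier ample in the classical sense, manufacture a klt log Fano boundary on $Y$ by cutting with a general divisor in a linear system of the form $|{-}mK_Y-mg^*A|$, and push it back down to $X$ via the small map $g$. Since $-K_X$ is ample, $\Rscr(X,-K_X)$ is finitely generated, so Lemma~\ref{lm:themostusefullemmaintheworld} supplies a small projective birational $g:Y\to X$ with $-K_Y:=g^{-1}_*(-K_X)$ both $\Q$-Cartier and $g$-ample, and Theorem~\ref{thm:ample<=>qample} upgrades this to ampleness of $-K_Y$ on $Y$. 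Because $g$ is small and $K_X+\Gamma$ is $\Q$-Cartier, one gets $K_Y+\Gamma_Y=g^*(K_X+\Gamma)$ with $\Gamma_Y:=g^{-1}_*\Gamma$, and \cite[2.30]{komo} then yields $(Y,\Gamma_Y)$ klt; in particular $(Y,0)$ is klt.

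Next I fix any ample $\Q$-Cartier divisor $A$ on $X$, rescaled small enough that $N:=-K_Y-g^*A$ is still ample on $Y$ (openness of ampleness in the Néron--Severi cone, with $g^*A$ $\Q$-Cartier and nef). For $m\geq 2$ such that $mN$ is very ample, let $F_Y\in|mN|$ be general; applying Bertini on a fixed log resolution $\pi:Z\to Y$ of $(Y,0)$, $\pi^*F_Y$ is smooth, has no exceptional component (the exceptional centers in $Y$ have codimension $\geq 2$), and meets the exceptional divisors transversely. A direct log discrepancy computation on $Z$ — the exceptional discrepancies are unchanged from $(Y,0)$, and the strict transform of $F_Y$ acquires discrepancy $-\tfrac{1}{m}>-1$ — then shows that $(Y,\tfrac{1}{m}F_Y)$ is klt. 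Set $\Delta:=\tfrac{1}{m}g_*F_Y\geq 0$ on $X$.

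Since $g$ is small, $g_*K_Y=K_X$, and pushforward preserves $\Q$-linear equivalence (a principal divisor on $Y$ pushes forward to the same principal divisor on $X$, via $K(Y)=K(X)$), so
\[
K_X+\Delta=g_*\big(K_Y+\tfrac{1}{m}F_Y\big)\sim_\Q g_*(K_Y+N)=g_*(-g^*A)=-A,
\]
which proves that $K_X+\Delta$ is $\Q$-Cartier and anti-ample. Moreover, both $K_Y+\tfrac{1}{m}F_Y$ and $g^*(K_X+\Delta)$ are $\Q$-Cartier on $Y$ pushing forward to $K_X+\Delta$; since $g$ has no exceptional divisors they must coincide, so $(Y,\tfrac{1}{m}F_Y)$ is the crepant pullback of $(X,\Delta)$ and the klt property transfers to $(X,\Delta)$, showing that $X$ is log Fano. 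The main obstacle is the Bertini step above: guaranteeing that a general $F_Y\in|mN|$ has pullback to $Z$ with no exceptional component and transverse intersection with the exceptional locus, so that the bound $-\tfrac{1}{m}>-1$ really suffices for klt; once this is in place, everything else is formal bookkeeping with $g_*$ and $g^*$ made possible by the smallness of $g$.
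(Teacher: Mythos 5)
Your proof is correct and follows essentially the same route as the paper: pass to the $\Q$-Cartierization $g:Y\to X$ of $-K_X$, cut with a general member of a base-point-free system of the form $|{-}mK_Y-mg^*A|$, and push the resulting klt anti-ample log pair down through the small map $g$. The one step where you genuinely diverge is in showing that $Y$ is klt: you pull back the given boundary crepantly, $K_Y+\Gamma_Y=g^*(K_X+\Gamma)$, apply \cite[2.30]{komo}, and then drop $\Gamma_Y$ using that $K_Y$ itself is $\Q$-Cartier on $Y$; the paper instead runs the discrepancy computation $K_Z-h^*K_Y=K_Z-\tfrac{1}{m}f^\natural(mK_X)>-1$ on a resolution, following \cite[5.14]{ltsings}. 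Your version is shorter and entirely standard, while the paper's phrasing in terms of the natural pullbacks $f^\natural(mK_X)$ is what allows the subsequent remark to extend the theorem to lt${}^+$ varieties, where no klt boundary is given a priori and one only has the finite generation of $\Rscr(X,-K_X)$; within the stated hypotheses both arguments are complete.
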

\begin{proof} Since $\Rscr(X,-K_X)$ is finitely generated there exists a small map $g:Y=\Proj_X\Rscr(X,-K_X)\rightarrow X$, such that $g^{-1}_*(-K_X)$ is $g$-ample and ample.Notice that, the map $g$ being small, $-K_Y=g_*^{-1}(-K_X)$ and $K_Y=g^\natural(K_X)$. As in \cite[5.14]{ltsings}, we can show that $Y$ has log terminal singularities. Indeed, since $X$ has klt singularities, there exists an $m>0$ such that, for any resolution $f:Z\rightarrow X$, $K_Z-\frac{1}{m}f^\natural(mK_X)>-1$. Let $Z$ be any such log resolution, which without loss of generality we can assume factors through $g$ as $f=g\circ h$, for some $h:Z\rightarrow Y$. Since $g^{-1}_*(-K_X)=-K_Y$ is $g$-ample, $\regsh_Y(-mK_Y)$ is globally generated over $X$ for $m$ sufficiently divisible, hence we have an isomorphism at the level of sheaves, that is, $\regsh_X(-K_X)\cdot\regsh_Y\cong\regsh_Y(-K_Y)$. Thus
$$
K_Z-h^*(K_Y)=K_Z-\frac{1}{m}h^*(mK_Y)=K_Y-\frac{1}{m}f^\natural(mK_X)>-1,
$$
where the last equality holds by [dFH09, Lemma 2.7]. Thus $Y$ (which is $\Q$-Gorenstein) has log terminal singularities.

Let now $A$ be any ample Cartier divisor on $X$. Since $-K_Y$ is ample, there exists $b>1$ such that $-bK_Y-g^*A$ is globally generated. Let $M$ be the general element in the system $M\in|\regsh_X(-bK_Y-g^*A)|$. Notice that $-bK_Y-bM\sim g^*A$, which is trivial on the fibers. Let $\Delta:=g_*M/b$. Since $K_Y+M/b\sim_\Q-g^*A$, $K_X+\Delta$ is $\Q$-Cartier, and $K_X+\Delta\sim_\Q-A$. Finally, since $Y$ has log terminal singularities, and $M$ was general, $(Y,M/b)$ has klt singularities. Notice that $g_*(M/b)=\Delta$ and $K_Y+M/b=g^*(K_X+\Delta)$ (by construction). Thus $(X,\Delta)$ has klt singularities \cite[2.30]{komo}.
\end{proof}

\begin{rk} In \cite[5.2]{ltsings}, the notion of \emph{log terminal${}^+$} singularities was introduced: if $X$ is a normal variety over $\C$, $X$ has \emph{lt${}^+$} singularities if for one (equivalently all) log resolution $f:Y\rightarrow X$, $K_Y+f^*(-K_X)$ has coefficients strictly bigger than $-1$ (for any prime component of the exceptional divisor). By \cite[5.15]{ltsings}, if $X$ has lt${}^+$ singularities, than is has klt singularities (in the sense of the above theorem) if and only if $\Rscr(X,-K_X)$ is finitely generated. Thus the above theorem is true under the hypothesis that $X$ has only lt${}^+$ singularities. Since lt${}^+$ is a broader class than klt, this suggests that the positivity condition on $-K_X$ affects the type of singularity. See \cite{karl-karen} for a phenomenon opposite in nature.
\end{rk}

The above statement is in particular satisfied by normal toric varieties. We will give the following slightly stronger example to show how we aim to use the Theorem in view of the Minimal Model Program.

\begin{ex}
Let $\pi: Z \to X$ be a small morphism of toric varieties such that $K_Z$ is ample and $\Q$-Cartier over $X$. Then for any $K_Z$-negative curve $C$, there exists a boundary $\Delta$ on $X$ such that $(K_X +\Delta).\pi_*C <0$. Indeed, let $C \subset Z$ such that $K_Z.C<0$. Since $K_Z$ is $\pi$-ample, $C$ is not contracted to a curve. Let $D$ be a very ample effective Cartier divisor on $X$, of the form $D= \sum d_{\rho}D_{\rho}$; in particular $D.\pi_*C >0$. We can assume that there exists an integer $k>0$ such that $D' :=D/k$ is a $\Q$-Cartier divisor such that $|d_{\rho}/k| <1$ for every $\rho \in \Sigma(1)$. Since $X$ is toric, $K_X=\sum -D_\rho$. Let us choose $\Delta = -K_X -D'$, that by assumption is an effective divisor and $K_X+ \Delta = -D'$ is $\Q$-Cartier and $(K_X+ \Delta).\pi_*C <0$. Again, notice that, since $X$ is toric and given the generality of the construction of $\Delta$, $(X,\Delta)$ is a klt pair. 

The next step is to show that some contractions on $Z$ induce contractions on $X$. Let $C$ be a curve in $X$ which generates an extremal ray of the cone of curves. Since the cone of curves of $X$ is the projection of the cone of curves on $Z$, there is a curve $C'$ on $Z$ which generates an extremal ray on $Z$ which maps onto the ray generated by $C$. By the discussion in the above paragraph, if $C'$ is $K_Z$-negative, there is a boundary $\Delta$ on $X$ such that $(X,\Delta)$ is a klt pair and $(K_X+\Delta).C<0$. Thus $C$ generates a $(K_X+\Delta)$-negative extremal ray. By the cone theorem, both $C$ and $C'$ induce contractions. We obtain the following diagram

\begin{center}
\begin{tikzpicture}
\node (A) at (-2,2) {$Z$};
\node (B) at (-2,-2) {$X$};
\node (C) at (2,-2){$\overline{X}$};
\node (D) at (2,2){$\overline{Z}$};
\node (E) at (0,0) {$W$};
\path[->] (A) edge[dotted] node[above]{$f$}(D);
\path[->] (A) edge node[left]{$\pi$}(B);
\path[->] (B) edge node[above]{cont${}_C$}(C);
\path[->] (A) edge node[right]{cont${}_{C'}$}(E);
\path[->] (E) edge node[right]{$g$}(C);
\path[->] (D) edge (E);
\end{tikzpicture}
\end{center}
(notice that the map $g$ must exists since $K_Z$ is $\pi$-ample and ${\rm cont}_{C'}$-antiample).
\end{ex}

%----------BIBLIOGRAPHY---------------------

    \bigskip
   Alberto Chiecchio,
   Department of Mathematics, 
   University of Arkansas, 
   SCEN 353, Fayetteville, AR 72701,
   E-mail: chieccho@uark.edu

   \bigskip
   Stefano Urbinati,
   Universit\`a degli Studi di Padova,
   Dipartimento di Matematica,
   ROOM 608, Via Trieste 63,
   35121 Padova (Italy),
   Email: urbinati.st@gmail.com

\end{document}